\newtheorem{dummy}{anything}[section]
\newtheorem{theorem}[dummy]{Theorem}
\newtheorem{lemma}[dummy]{Lemma}
\newtheorem{proposition}[dummy]{Proposition}
\newtheorem{corollary}[dummy]{Corollary}
\theoremstyle{definition}
\newtheorem{definition}[dummy]{Definition}
  \newtheorem{example}[dummy]{Example}
  \newtheorem{remark}[dummy]{Remark}
\newcommand{\cC}{\mathcal C}
\newcommand{\cE}{\mathcal E}
\newcommand{\cF}{\mathcal F}
\newcommand{\cU}{\mathcal U}
\newcommand{\cV}{\mathcal V}
\newcommand{\cW}{\mathcal W}
\newcommand{\bZ}{\mathbf Z}
\newcommand{\bbF}{\mathbb F}
\newcommand{\bbZ}{\mathbb Z}
\DeclareMathOperator{\Hom}{Hom} 
\DeclareMathOperator{\rk}{rk}
\DeclareMathOperator{\image}{im}
\DeclareMathOperator{\Ext}{Ext}
\DeclareMathOperator{\Ind}{Ind}
\DeclareMathOperator{\Res}{Res} 
\DeclareMathOperator{\Mor}{Mor}
\newcommand{\la}{\langle}
\newcommand{\ra}{\rangle}
\newcommand{\lra}{\longrightarrow}
\newcommand{\bd}{\partial}
\newcommand{\id}{\mathrm{id}}
\def\G{\varGamma}
\def\ep{\varepsilon}
\newcommand{\leftexp}[2]{{\vphantom{#2}}^{ #1}{\hskip-1pt#2}}
\DeclareMathOperator{\Or}{Or}
\newcommand\RG{R\G}
\newcommand\OrG{\Or _{\cF}G}
\newcommand\OrH{\Or_{\cF}H}
\newcommand\un{\underline}
\newcommand\uR[1]{R[{#1}^{\, \textbf{?}\,}]}
\newcommand{\ot}{\otimes}
\def\maprt#1{\smash{\,\mathop{\longrightarrow}\limits^{#1}\,}}
\begin{document}

\title{Relative Group Cohomology and The Orbit Category}
\author{Semra Pamuk}
\author{Erg\"un Yal\c c\i n}

\address{Department of Mathematics, Middle East Technical University, 06531,Ankara, Turkey }

\email{pasemra@metu.edu.tr }

\address{Department of Mathematics, Bilkent University,
06800 Bilkent, Ankara, Turkey}

\email{yalcine@fen.bilkent.edu.tr }

\date{\today}

\thanks{2010 {\it Mathematics Subject Classification.} Primary: 20J06; Secondary: 55N25.} \thanks{The second author is partially supported by T\" UB\. ITAK-TBAG/110T712 and by T\" UB\. ITAK-B\. IDEB/2221 Visiting Scientist Program.}

\begin{abstract} Let $G$ be a finite group and $\cF$ be a family of subgroups of $G$ closed under conjugation and taking subgroups. We consider the question whether there exists a periodic relative $\cF$-projective resolution for $\bbZ$ when $\cF$ is the family of all subgroups $H \leq G$ with $\rk H \leq \rk G-1$. We answer this question negatively by calculating the relative group cohomology $\cF H^* (G, \bbF_2)$ where $G=\bbZ/2\times \bbZ /2$ and $\cF$ is the family of cyclic subgroups of $G$. To do this calculation we first observe that the relative group cohomology $\cF H^*(G, M)$ can be calculated using the ext-groups over the orbit category of $G$ restricted to the family $\cF$. In second part of the paper, we discuss the construction of a spectral sequence that converges to the cohomology of a group $G$ and whose horizontal line at $E_2$ page is isomorphic to the relative group cohomology of $G$.
\end{abstract}

\maketitle

\section{Introduction}
\label{sect:intro} 
Let $G$ be a finite group and $R$ be a
commutative ring of coefficients. For every $n \geq 0$, the $n$-th cohomology group $H^n (G, M)$ of $G$ with coefficients in an $RG$-module $M$ is defined as the $n$-th cohomology group of the cochain complex $\Hom _{RG} (P_* , M)$ where $P_*$ is a projective resolution of $R$ as an $RG$-module. Given a family $\cF$ of subgroups of $G$ which is closed under conjugation and taking subgroups, one defines the relative group cohomology $\cF H^*(G, M)$ with respect to the family $\cF$ by adjusting the definition  in the following way: We say a short exact sequence of $RG$-modules is $\cF$-split if it splits after restricting it to the subgroups $H$ in $\cF$. The definition of projective resolutions is changed accordingly using $\cF$-split sequences (see Definition \ref{def:F-projective resolution}). Then, for every $RG$-module $M$,
the relative group cohomology $\cF H^*(G,M)$ with respect to the family $\cF$ is defined as the cohomology of the cochain complex $\Hom _{RG}(P_*, M)$ where 
$$P_*:\ \cdots \to P_n \maprt{\partial _n} P_{n-1} \to \cdots \to P_0 \to R\to 0$$ is a relative $\cF$-projective resolution of $R$.  

Computing the relative group cohomology is in general a difficult task. Our first theorem gives a method for computing relative group cohomology using ext-groups over the orbit category. In general calculating ext-groups over the orbit category is easier since there are many short exact sequences of modules over the orbit category which come from the natural filtration of the poset of subgroups in $\cF$. To state our theorem, we first introduce some basic definitions about orbit categories. 

The orbit category $\G=\OrG$ of the group $G$ with respect to the family $\cF$ is defined as the category whose objects are orbits of the form $G/H$ where $H \in \cF$ and whose morphisms from $G/H$ to $G/K$ are given by $G$-maps from $G/H$ to $G/K$. An $R\G$-module is defined as a contravariant functor from $\G$ to the category of $R$-modules. We often denote the $R$-module $M(G/H)$ simply by $M(H)$ and call $M(H)$ the value of $M$ at $H\in \cF$. The maps $M(H) \to M(K)$ between two subgroups $H$ and $K$ can be expressed as compositions of conjugations and restriction maps. The category of $R \G$-modules has enough projectives and injectives, so one can define ext-groups for a pair of $R\G$-modules in the usual way.

There are two $R\G$-modules which have some special importance for us. The first one is the constant functor $\underline R$ which has the value $R$ at $H$ for every $H\in \cF$ and the identity map as maps between them. The second module that we are interested in is the module $M^?$ which is defined for any $RG$-module $M$ as the $R\G$-module that takes the value $M^H$ at every $H \in \cF$ with the usual restriction and conjugation maps coming from the restriction and conjugation of invariant subspaces. Our main computational tool is the following:

\begin{theorem}\label{thm:main 1; computational tool} Let $G$ be a finite
group and $\cF$ be a family of subgroups of $G$ closed under conjugation and taking subgroups. Then, for every $RG$-module $M$,  $$ \cF H^* (G, M)\cong \Ext _{R\G} ^* (\underline{R}, M^? ).$$
\end{theorem}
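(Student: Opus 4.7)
The plan is to introduce a functor $(-)^?\colon RG\textrm{-Mod}\to R\G\textrm{-Mod}$ and show that it carries an $\cF$-projective resolution of $R$ to an ordinary projective resolution of $\underline{R}$ over the orbit category, while preserving the relevant Hom groups. The main input on the $RG$-module side is the standard adjunction
$$\Hom_{RG}(R[G/K],N)\cong N^K,$$
and the main input on the orbit category side is Yoneda's lemma, which identifies the representable functor $R\Hom_G(?,G/K)$ with the free $R\G$-module at $G/K$. These two identifications agree under $(-)^?$: the $R\G$-module $(R[G/K])^?$ assigns to $G/H$ the space $(R[G/K])^H\cong R\Hom_G(G/H,G/K)$, so $(R[G/K])^?$ is precisely the free $R\G$-module at $G/K$ whenever $K\in\cF$.

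First I would check that $(-)^?$ sends $\cF$-projectives to projective $R\G$-modules. Every $\cF$-projective $RG$-module is a direct summand of a direct sum of modules $R[G/K]$ with $K\in\cF$, and the functor $(-)^?$ is additive, so the conclusion follows from the previous paragraph. Next I would verify that if $P_*\to R\to 0$ is $\cF$-split exact, then the sequence $P_*^?\to\underline{R}\to 0$ of $R\G$-modules is exact. Exactness of $R\G$-module sequences is tested pointwise at each $G/H$, and by hypothesis the restriction of $P_*\to R\to 0$ to $RH$ is split exact for every $H\in\cF$; applying the additive functor $(-)^H$ to a split exact sequence preserves split exactness, so the sequence
$$\cdots\to P_n^H\to P_{n-1}^H\to\cdots\to P_0^H\to R^H\to 0$$
is exact for every $H\in\cF$, which is the desired pointwise exactness. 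Together these show that $P_*^?\to\underline{R}\to 0$ is a projective resolution of $\underline{R}$ in $R\G\textrm{-Mod}$.

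The final step is to promote the pointwise identity $\Hom_{RG}(R[G/K],M)=M^K=\Hom_{R\G}((R[G/K])^?,M^?)$ to an isomorphism of cochain complexes $\Hom_{RG}(P_*,M)\cong\Hom_{R\G}(P_*^?,M^?)$. Starting from the case $P=R[G/K]$, the isomorphism extends by additivity to arbitrary direct sums, and then descends to direct summands since a splitting on $Q$ transports to a splitting on the summand $P$; consequently it holds for every $\cF$-projective $RG$-module, and in particular for each term $P_n$. The isomorphism is natural in all variables, hence compatible with the differentials of $P_*$, and taking cohomology of both sides yields
$$\cF H^*(G,M)=H^*\bigl(\Hom_{RG}(P_*,M)\bigr)\cong H^*\bigl(\Hom_{R\G}(P_*^?,M^?)\bigr)=\Ext^*_{R\G}(\underline{R},M^?).$$

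The main obstacle is the exactness portion of the second step: one must use the $\cF$-split hypothesis, not merely exactness of $P_*$, because the fixed-point functor $(-)^H$ is only left exact in general. Packaging this carefully, and making sure the definition of $\cF$-projective in use is compatible with the representability description of free $R\G$-modules, is the only delicate point; everything else is formal manipulation with adjunctions and Yoneda.
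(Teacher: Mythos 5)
There is a genuine error at the very first step of your argument. You claim that $(R[G/K])^H \cong R\Hom_G(G/H,G/K)$, i.e.\ that the $H$-invariants of the permutation module $R[G/K]$ coincide with the free $R$-module on the set $(G/K)^H$ of $H$-fixed cosets. This is false: $(R[G/K])^H$ has an $R$-basis given by the $H$-orbit sums in $G/K$, so its rank is $|H\backslash G/K|$, whereas $R[(G/K)^H]$ has rank $|(G/K)^H|$. These agree only when every $H$-orbit on $G/K$ is a singleton. Consequently $(R[G/K])^?$ is \emph{not} the free $R\G$-module $\uR{G/K}$, and in fact it is generally not projective at all. A concrete counterexample: take $G=\bbZ/2$, $\cF=\{1,G\}$, $R=\bbF_2$, $K=1$. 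Then $(RG)^?$ has $R$-dimensions $(2,1)$ at $(G/1,G/G)$, while the free modules $\uR{G/1}$ and $\uR{G/G}=\underline{R}$ have dimensions $(2,0)$ and $(1,1)$; applying $\Hom_{R\G}((RG)^?,-)$ to the short exact sequence $0\to R_0\to\underline{R}\to R'\to 0$ (where $R_0$ is $R$ at $G/1$ and zero at $G/G$, and $R'$ is the reverse) gives $R\xrightarrow{\mathrm{id}}R\xrightarrow{0}R$, which is not right exact, so $(RG)^?$ is not projective. This is to be expected: the functor $(-)^?$ is the right adjoint $\lim_{\{1\}}^{\cF}$ to restriction (Example \ref{ex:invariantfunctor}), so by Corollary \ref{cor:injtoinj} it preserves \emph{injectives}, not projectives.

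This collapses your Step 2 and hence your whole strategy: $P_*^?\to\underline{R}$ need not be a projective resolution, so its $\Hom$-complex does not compute $\Ext^*_{R\G}(\underline{R},M^?)$. The paper avoids exactly this pitfall by going in the opposite direction: it starts with a projective $R\G$-resolution $P_*$ of $\underline{R}$ and passes to $P_*(1)$ by evaluation at $G/1$. Here the relevant identity $\uR{G/K}(G/1)=R[G/K]$ \emph{does} hold on the nose, Proposition \ref{pro:proj resolution at G/1} shows $P_*(1)$ is a relative $\cF$-projective resolution, and Lemma \ref{lem:important property of M^?} (which is the adjunction $\Hom_{R\G}(N,M^?)\cong\Hom_{RG}(N(1),M)$ specialized to projectives) supplies the $\Hom$-identification. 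Your last step, identifying the $\Hom$-complexes via the adjunction, is sound, and your exactness argument using $\cF$-splitness is also fine; but without projectivity of the terms, the proof does not go through. (Incidentally, the subsidiary claim that every $\cF$-projective module is a summand of a sum of $R[G/K]$'s is also false in general — Proposition \ref{pro:proj is direct summand of free} only gives summands of $N\otimes_R RX$ — though one can sidestep this by choosing an $\cF$-free permutation resolution. The projectivity failure for $(R[G/K])^?$ is the fatal one.)
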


This theorem allows us to do some computations which have some
importance for the construction of finite group actions on spheres. One of the ideas for constructing group actions on spheres is to construct chain complexes of finitely generated permutation modules of certain isotropy type and then find a $G$-$CW$-complex which realizes this permutation complex as its chain complex. One of the questions that was raised in this process is the following: Given a
finite group $G$ with rank $r$, if we take $\cF$ as the family of all subgroups $H$ of $G$ with $\rk H \leq r-1$, then does there exist an $\cF$-split sequence
of finitely generated permutation modules $\bbZ X_i$ with isotropy in $\cF$ such that
$$ 0 \to \bbZ \to \bbZ X_n \to \cdots \to \bbZ X_2 \to \bbZ X_1 \to \bbZ X_0 \to \bbZ \to 0$$
is exact? We answer this question negatively by calculating the relative group cohomology of the Klein four group relative to its cyclic subgroups. Note that if there were an exact sequence as above, then by splicing it with itself infinitely many times we could obtain a relative $\cF$-projective resolution and as a consequence the relative group cohomology $\cF H^* (G, \bbF _2)$ would be periodic. We prove that this is not the case.

\begin{theorem}\label{thm:main 2; not periodic}
Let $G=\bZ /2 \times \bZ /2$ and $\cF$ be the family
of all cyclic subgroups of $G$. Then, $\cF H^* (G, \bbF _2)$ is not
periodic.
\end{theorem}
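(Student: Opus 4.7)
The plan is to apply Theorem 1.1 to reduce to computing $\Ext^*_{\bbF_2\Gamma}(\underline{\bbF_2},\underline{\bbF_2})$ over the orbit category $\Gamma=\OrG$, and then to show that these Ext groups have $\bbF_2$-dimension tending to infinity, which precludes periodicity.

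Because $G$ is abelian and $\cF=\{1,C_1,C_2,C_3\}$, the category $\Gamma$ is especially simple: $\Hom_\Gamma(G/K,G/H)$ is nonempty iff $K\leq H$, and there are no morphisms between $G/C_i$ and $G/C_j$ for $i\neq j$. Its simple modules are the atomic functors $S_H$ ($H\in\cF$) supported at $G/H$ with value $\bbF_2$, and the constant functor $\underline{\bbF_2}$ sits in the short exact sequence
\begin{equation*}
0\to S_1\to \underline{\bbF_2}\to S_{C_1}\oplus S_{C_2}\oplus S_{C_3}\to 0.
\end{equation*}
A first key computation gives $\Ext^n_{\bbF_2\Gamma}(S_1,\underline{\bbF_2})\cong H^n(G,\bbF_2)=\bbF_2^{n+1}$: a minimal projective resolution of $S_1$ uses only copies of $P_{G/1}$ (since $P_{G/1}$ has value $\bbF_2 G$ at $G/1$ and is zero at each $G/C_i$), and applying $\Hom_{\bbF_2\Gamma}(-,\underline{\bbF_2})$ reduces, by Yoneda, to the standard cochain complex computing $H^*(G,\bbF_2)$.

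To transfer this unbounded growth to $\Ext^n_{\bbF_2\Gamma}(\underline{\bbF_2},\underline{\bbF_2})$, I set $a_n=\dim\Ext^n(\underline{\bbF_2},S_1)$ and $\beta_n=\dim\Ext^n(S_{C_i},S_1)$ (independent of $i$ by the $S_3$-symmetry among the $C_i$). Applying $\Hom_{\bbF_2\Gamma}(-,S_1)$ and $\Hom_{\bbF_2\Gamma}(\underline{\bbF_2},-)$ to the SES above and chasing the two resulting long exact sequences yields the bounds
\begin{equation*}
a_n\geq 3\beta_n - n \qquad\text{and}\qquad \dim\Ext^n(\underline{\bbF_2},\underline{\bbF_2})\geq a_n - 3,
\end{equation*}
the second bound relying on $\dim\Ext^n(\underline{\bbF_2},\bigoplus_i S_{C_i})=3$, which in turn follows from the observation that in a minimal resolution of $\underline{\bbF_2}$ each $P_{G/C_i}$ appears with multiplicity exactly $1$ (the syzygy modules $\Omega^n\underline{\bbF_2}$ evaluated at $G/C_i$ are always $1$-dimensional trivial $\bbF_2[W_{C_i}]$-modules). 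The main obstacle is to show $\beta_n$ is unbounded: direct computation yields $\beta_0=0$, $\beta_1=1$, $\beta_2=2$, and I would prove $\beta_n=n$ by induction, the inductive step amounting to a careful analysis of the syzygy $\Omega^n S_{C_i}$ at $G/1$ as an $\bbF_2 G$-module and of the image, in its $G$-coinvariants, of the restriction map from the $G/C_i$-leg. Once $\beta_n=n$ is established, the bounds above force $\dim \cF H^n(G,\bbF_2)\geq 2n-3\to\infty$, and periodicity is ruled out.
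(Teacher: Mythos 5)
Your overall reduction is the same as the paper's first step (Theorem~\ref{thm:main 1; computational tool} plus $R^?=\underline R$), but from there you take a genuinely different route. The paper's proof uses the short exact sequence
\begin{equation*}
0\to R_0\oplus R_0 \to R_{H_1}\oplus R_{H_2}\oplus R_{H_3}\to \underline R\to 0
\end{equation*}
with $\underline R$ as the \emph{quotient}, computes $\Ext^*_{R\G}(R_0,\underline R)\cong H^*(G,R)$ and $\Ext^*_{R\G}(R_{H_i},\underline R)\cong H^*(G/H_i,R)$, identifies the connecting maps with inflations, and reads off the exact dimensions $(1,0,1,3,5,7,\dots)$. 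You instead filter $\underline R$ from the other side, via $0\to S_1\to \underline R\to \bigoplus_i S_{C_i}\to 0$, and run two long exact sequences to get a lower bound. That is a legitimate alternative strategy, and your auxiliary computations that are actually justified are correct: the minimal resolution of $S_1$ uses only copies of $P_{G/1}$, so $\Ext^n(S_1,\underline R)\cong H^n(G,R)$; and in a minimal resolution of $\underline R$ each $P_{G/C_i}$ appears exactly once in every degree, because evaluating the minimal $R\G$-resolution at $G/C_i$ yields the minimal free resolution of $\bbF_2$ over $\bbF_2[W_{C_i}]\cong\bbF_2[\bbZ/2]$ (this uses that the radical of $R\G$-modules evaluates objectwise to the radical at $C_i$). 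Your two inequalities $a_n\ge 3\beta_n-n$ and $\dim\Ext^n(\underline R,\underline R)\ge a_n-3$ then follow correctly from the long exact sequences, using $\dim\Ext^{n-1}(S_1,S_1)=n$.

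The genuine gap is the claim $\beta_n=n$. You verify it for $n\le 2$ and then write ``I would prove $\beta_n=n$ by induction, the inductive step amounting to a careful analysis of the syzygy $\Omega^n S_{C_i}$ at $G/1$ \dots and of the image, in its $G$-coinvariants, of the restriction map from the $G/C_i$-leg.'' This is not a proof; it is a promissory note, and the promised syzygy analysis is the hardest part of your whole argument. (Indeed, already at $n=2$ one has to track a $4$-dimensional $\bbF_2G$-module $\Omega^2S_{C_i}(1)$ and show its coinvariants modulo the image from $C_i$ are $2$-dimensional — this is not a routine verification.) Without a complete induction, the chain of inequalities ends at $\beta_n$ unbounded unproved, so the theorem is not established.

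There is, however, a clean way to finish that is in the spirit of your approach and short-circuits the syzygy computation entirely. Use the other natural short exact sequence of $R\G$-modules, $0\to S_1\to R_{H_i}\to S_{C_i}\to 0$, where $R_{H_i}$ is the module from Section~\ref{sect:computation} with value $\bbF_2$ at $1$ and $H_i$ and the identity restriction map. Its minimal $R\G$-projective resolution has $P_n\cong P_{G/H_i}$ in every degree (exactly the resolution constructed in the proof of Lemma~\ref{lem:comp for R_H}), so $\Ext^n_{R\G}(R_{H_i},S_1)=\Hom(P_{G/H_i},S_1)=S_1(H_i)=0$ for all $n$. The long exact sequence for $\Hom_{R\G}(-,S_1)$ applied to this short exact sequence then gives $\Ext^{n}_{R\G}(S_{C_i},S_1)\cong\Ext^{n-1}_{R\G}(S_1,S_1)$ for all $n\ge 1$, i.e.\ $\beta_n=n$ immediately. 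With that in place your chain of inequalities yields $\dim\cF H^n(G,\bbF_2)\ge 2n-3$, which rules out periodicity; the paper's own sequence shows this bound is in fact the exact value for $n\ge 2$. I'd encourage you to replace the promised induction by this dimension shift, at which point your alternative proof is complete and only slightly longer than the paper's.
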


The proof of this theorem is given by computing the
dimensions of $\cF H^i (G, \bbF _2)$ for all $i$ and showing that the dimensions grow by the sequence $(1, 0, 1, 3, 5, 7, \dots )$. In the
computation, we use Theorem \ref{thm:main 1; computational tool} and some short exact sequences coming from the poset of subgroups of $G$.  

In the rest of the paper, we discuss the connections between relative group cohomology and higher limits. Given two families $\cU \subseteq \cV$ of subgroups of $G$, the inverse limit functor $$\lim _{\longleftarrow}{} _{\cU} ^{\cV} : R \G _{\cU} \to R \G _{\cV},$$ where $\G _{\cU}=\Or _{\cU}G$ and $\G _{\cV} =\Or _{\cV} G$, is defined as the functor which is right adjoint to the restriction functor (see Definition \ref{def:relativelimit} and Proposition \ref{pro:adjointness}). The limit functor is left exact, so the $n$-th higher limit $(\lim_{\cU} ^{\cV})^n$ is defined as the $n$-th right derived functor of the limit functor. Compositions of limit functors satisfy the identity $$ {\rm lim} _{\cU} ^{\cW} ={\rm lim} _{\cV} ^{\cW} \circ {\rm lim} _{\cU} ^{\cV}\ .$$ So  there is a Grothendieck spectral sequence for the right derived functors of the limit functor. A special case of this spectral sequence gives a spectral sequence that converges to the cohomology of a group and whose horizontal line is isomorphic to the relative group cohomology.

\begin{theorem}[Theorem 6.1, \cite{martinez}]\label{thm:main 3; spect. seq.}
Let $G$ be a finite group and $R$ be a commutative coefficient ring. Let $\G=\OrG$ where $\cF$ is a family of subgroups of $G$ closed under conjugation and taking subgroups. Then, for every
$RG$-module $M$, there is a first quadrant spectral sequence
$$ E_2^{p,q}=\Ext_{R\G} ^p (\underline R , H^q (? , M) )\Rightarrow H^{p+q}(G,M).$$
In particular, on the horizontal line, we have $E_2 ^{p,0}\cong \cF
H^p (G, M )$.
\end{theorem}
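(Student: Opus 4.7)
The strategy is to realize $H^*(G,M)$ as the right derived functors of a composition of two left-exact functors and apply the Grothendieck spectral sequence. Define
$$F_1: RG\text{-}\Mod \longrightarrow R\G\text{-}\Mod, \quad M \mapsto M^?,$$
$$F_2: R\G\text{-}\Mod \longrightarrow R\text{-}\Mod, \quad N \mapsto \Hom_{R\G}(\underline{R}, N).$$
Since $\cF$ is closed under taking subgroups and is nonempty, $\{1\}\in\cF$, and $F_1$ is nothing but the relative limit $\lim_{\{1\}}^{\cF}$ that is right adjoint to restriction (evaluation at $G/1$). Because this restriction carries $\underline{R}$ to the trivial $RG$-module $R$, the adjunction gives
$$F_2F_1(M) = \Hom_{R\G}(\underline{R}, M^?) = \Hom_{RG}(R, M) = M^G,$$
so the right derived functors of $F_2\circ F_1$ are exactly $H^*(G, -)$.

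To invoke the Grothendieck spectral sequence for $F_2\circ F_1$, I need to (a) identify the derived functors $R^pF_2$ and $R^qF_1$ and (b) verify that $F_1$ sends injectives to $F_2$-acyclics. For (a), the identity $R^pF_2 = \Ext^p_{R\G}(\underline{R},-)$ holds by definition. For $R^qF_1$, I would use that evaluation at $G/H$ is exact, since kernels and cokernels in $R\G\text{-}\Mod$ are computed pointwise, so evaluation commutes with formation of right derived functors; therefore
$$(R^qF_1(M))(H) = R^q\bigl((-)^H\bigr)(M) = \Ext^q_{RG}(R[G/H], M) = H^q(H, M)$$
by Shapiro's lemma, together with naturality in $H$ that packages these values into the $R\G$-module $H^q(?, M)$.

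The main obstacle is step (b). Let $M$ be an injective $RG$-module. Then every short exact sequence of $RG$-modules ending in $M$ splits, so in particular its restriction to each $H\in\cF$ splits, which means such a sequence is $\cF$-split. Consequently $M$ is $\cF$-injective in the sense of relative homological algebra, so $\cF H^p(G, M)=0$ for $p>0$. Combined with Theorem \ref{thm:main 1; computational tool}, this yields
$$R^pF_2(F_1(M)) = \Ext^p_{R\G}(\underline{R}, M^?) = \cF H^p(G, M) = 0 \quad\text{for } p>0,$$
which is exactly the required $F_2$-acyclicity. The Grothendieck spectral sequence then reads
$$E_2^{p,q} = \Ext^p_{R\G}\bigl(\underline{R},\, H^q(?, M)\bigr) \Longrightarrow H^{p+q}(G, M).$$
On the horizontal line $q=0$ we have $H^0(?, M) = M^?$, and Theorem \ref{thm:main 1; computational tool} gives $E_2^{p,0} = \Ext^p_{R\G}(\underline{R}, M^?) \cong \cF H^p(G, M)$, as claimed.
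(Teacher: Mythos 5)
Your proof is correct and uses the same core strategy as the paper: factor the $G$-invariants functor as $F_2 \circ F_1$ with $F_1(M)=M^?$ and $F_2=\Hom_{R\G}(\underline{R},-)$, and apply the Grothendieck spectral sequence after identifying $R^pF_2 = \Ext^p_{R\G}(\underline{R},-)$ and $R^qF_1 = H^q(?,-)$. The paper packages this through a more general statement (Theorem \ref{thm:mostgeneralform}) for a chain of three families $\cU \subseteq \cV \subseteq \cW$, built from the relative limit functors of Section \ref{sect:limitfunctor}, and then specializes to $\cU=\{1\}$, $\cV=\cF$, $\cW=\{\text{all}\}$; you go directly to this special case, which is a reasonable economy. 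The more substantive difference lies in verifying that $F_1$ carries injectives to $F_2$-acyclic objects. The paper proves the stronger assertion that the relative limit functor always preserves injectives (Corollary \ref{cor:injtoinj}), a formal consequence of its adjunction with the exact restriction functor (Proposition \ref{pro:adjointness}). You instead check acyclicity directly: for $I$ injective you invoke Theorem \ref{thm:main 1; computational tool} to rewrite $\Ext^p_{R\G}(\underline{R},I^?)$ as $\cF H^p(G,I)$ and then argue this vanishes for $p>0$. Both routes are valid. One small simplification of your version: there is no need to pass through ``$\cF$-injective'' and the comparison theorem of relative homological algebra --- since $I$ is absolutely injective, $\Hom_{RG}(-,I)$ is already an exact functor, so applying it to any relative $\cF$-projective resolution of $R$ yields $\cF H^p(G,I)=0$ for $p>0$ immediately.
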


This is a special case of a spectral sequence constructed 
by Mart{\' i}nez-P{\' e}rez \cite{martinez} and it is stated as a theorem (Theorem 6.1) in \cite{martinez}. There is also a version of this sequence for infinite groups constructed by Kropholler \cite{kropholler} using a different approach. In Section \ref{sect:higherlimits}, we discuss the edge homomorphisms of this spectral sequence and the importance of this spectral sequence for approaching the questions related to the essential cohomology of finite groups. We also discuss how this spectral sequence behaves in the case where $G=\bbZ /2 \times \bbZ /2$ and $\cF$ is the family of cyclic subgroups of $G$. 

The paper is organized as follows:  In Section \ref{sect:relativecohomology}, we review the concepts of $\cF$-split sequences and relative projectivity of an $RG$-module with respect to a family of subgroups $\cF$ and define relative group cohomology $\cF H^* (G, M)$. In Section \ref{sect:orbitcategory}, orbit category and ext-groups over the orbit category are defined and Theorem \ref{thm:main 1; computational tool} is proved. Then in Section \ref{sect:computation}, we perform some computations with the ext-groups over the orbit category and prove Theorem \ref{thm:main 2; not periodic}. In Sections \ref{sect:limitfunctor} and \ref{sect:higherlimits}, we introduce the higher limits and construct the spectral sequence stated in Theorem \ref{thm:main 3; spect. seq.}.

\section{Relative group cohomology}
\label{sect:relativecohomology}

Let $G$ be a finite group, $R$ be a commutative ring of
coefficients, and $M$ be a finitely generated $RG$-module. 
In this section we introduce the definition of relative group
cohomology $\cF H^* (G, M)$ with respect to a family of subgroups $\cF$. When we say $\cF$ is a family of subgroups of $G$, we always mean that $\cF$ is closed under conjugation and taking subgroups, i.e., if $H\in\cF$ and $K\leq G$ such that $K^g \leq H $, then $K \in \cF$.  

\begin{definition}\label{def:F-split}
A short exact sequence $\cE : 0 \to A \to B \to C \to 0$ of $RG$-modules is called $\cF$-\emph{split} if for every $H \in \cF$, the restriction of $\cE$ to $H$ splits as an extension of  $RH$-modules.
\end{definition}

For a $G$-set $X$, there is a notion of $X$-split sequence defined as follows:

\begin{definition}\label{def:X-split}
Let $X$ be a $G$-set and let $RX$ denote the permutation module with the basis given by $X$.  Then, a short exact sequence $0 \to A \to B \to C \to 0$ of $RG$-modules is called $X$-split if the sequence
\begin{equation*}
0\to A\otimes_{R}RX \to B\otimes_{R} RX \to C\otimes_{R }RX \to 0\end{equation*} 
splits as a sequence of $RG$-modules.
\end{definition}

These two notions are connected in the following way:

\begin{proposition}[Lemma 2.6, \cite{nucinkis1}]\label{pro:F-split vs X-split} Let $G$ be a finite
group and $\cF$ be a family of subgroups of $G$. Let $X$ be a
$G$-set such that $X^H\neq\emptyset$ if and only if $H\in\cF.$
Then, a sequence $0 \to A \to B \to C \to 0$ of $RG$-modules is $\cF$-split if and only if it is $X$-split.
\end{proposition}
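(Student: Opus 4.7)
The plan is to reduce to a single-orbit analysis via the standard Frobenius-type isomorphism $A \otimes_R R[G/H] \cong \Ind_H^G \Res_H^G A$ of $RG$-modules, given on basis elements by $a \otimes gH \mapsto g \otimes g^{-1} a$. Writing $X = \bigsqcup_i G/H_i$ as a disjoint union of $G$-orbits, the hypothesis $X^H \neq \emptyset \Leftrightarrow H \in \cF$ forces each stabilizer $H_i$ to lie in $\cF$, so the sequence $\cE \otimes_R RX$ decomposes as the direct sum $\bigoplus_i \Ind_{H_i}^G \Res_{H_i}^G \cE$.

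For the forward direction, I would observe that if $\cE$ is $\cF$-split then each $\Res_{H_i}^G \cE$ splits over $RH_i$; since induction is additive it preserves splittings, and a direct sum of split sequences is split. This produces the required $RG$-splitting of $\cE \otimes_R RX$.

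For the converse, my approach is a fixed-point trick. Given $H \in \cF$, choose $x \in X^H$ and define $R$-linear maps $\iota_M \colon M \to M \otimes_R RX$, $m \mapsto m \otimes x$, and $\pi_M \colon M \otimes_R RX \to M$ by projection onto the $x$-component of the internal decomposition $M \otimes_R RX \cong \bigoplus_{y \in X} M$. Because $H$ fixes $x$ (so $hy \neq x$ whenever $y \neq x$), both $\iota_M$ and $\pi_M$ are $RH$-linear, they are natural in $M$, and satisfy $\pi_M \circ \iota_M = \id_M$. If $s \colon C \otimes RX \to B \otimes RX$ is an $RG$-splitting of $\cE \otimes RX$, then $s' := \pi_B \circ s \circ \iota_C$ will be an $RH$-splitting of $\Res_H^G \cE$, since naturality of $\pi$ yields $g \circ \pi_B = \pi_C \circ (g \otimes 1)$, and hence $g \circ s' = \pi_C \circ \iota_C = \id_C$.

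The main bookkeeping obstacle is the level of equivariance: the maps $\iota_M$ and $\pi_M$ are $RH$-linear but not $RG$-linear, since $X$ need not admit a global $G$-fixed point. This is precisely why the converse must be handled one subgroup (and one choice of fixed point) at a time, whereas the forward direction packages everything uniformly through Frobenius reciprocity.
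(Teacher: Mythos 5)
Your proof is correct and follows essentially the same strategy as the paper: Frobenius reciprocity $A \otimes_R R[G/H] \cong \Ind_H^G \Res_H^G A$ handles the forward direction, and the converse is obtained by sandwiching the $RG$-splitting between an $RH$-linear section $c \mapsto c\otimes x$ and an $RH$-linear retraction, both built from an $H$-fixed point. The only cosmetic difference is that the paper reduces to a single orbit $G/H$ and uses the $G$-equivariant augmentation $\id\otimes\ep$ as the retraction, whereas you work with $X$ directly and use the coordinate projection $\pi_M$; the naturality computation is identical.
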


\begin{proof} We first show  that given a short exact sequence $ 0 \to A \maprt{i} B \maprt{\pi} C\to 0$ of $RG$-modules, its restriction to $H\leq G$ splits as a sequence of $RH$-modules if and only if the sequence
\begin{equation}\label{eqn:tensor with R[G/H]}
\xymatrix{0\ar[r] & A\otimes_R R[G/H] \ar[r]^{i \ot \id} & B\otimes_R R[G/H] \ar[r]^{\pi \ot \id} &  C\otimes_R R[G/H]\ar[r] & 0}
\end{equation}
splits as a sequence of $RG$-modules. Since $A\otimes_{R}
R[G/H]\cong \Ind _H ^G \Res ^G _H A$, the ``only if" direction is clear. For the ``if" direction assume that the sequence
(\ref{eqn:tensor with R[G/H]}) splits. Let $s$ be a splitting for $\pi \ot \id$. Then consider the following diagram
$$\xymatrix{B \ar[r]^{\pi} & C \ar@<1ex> [d]^{\eta}\\
B\otimes_RR[G/H]\ar[u]^{\id\otimes\ep}\ar[r]_{\pi \otimes \id}&C \otimes_RR[G/H]\ar@/_/[l]_{s}\ar@<1ex>[u]^{\id\otimes\ep}}$$ where $\ep$ is the augmentation map $\ep: R[G/H]\to R$ which takes $gH $ to $1\in R$ for all $g \in G$ and $\eta$ is the map defined by $\eta (c)=c\ot H$. Define $\hat s : C \to B$ to be the composition $(\id \ot \ep) s \eta$. Then we have
$$ \pi \hat{s} = \pi (\id\otimes\ep ) s \eta = (\id\otimes\ep )(\pi \otimes \id)s \eta =(\id\otimes\ep )\eta =\id.$$ Since $\eta$ is an $H$-map, the splitting $\hat s$ is also an $H$-map. Thus, the short exact sequence $0 \to A \to B \to C \to 0$ splits when it is restricted to $H$.

Now, the general case follows easily since $RX\cong \oplus _{i\in I} R[G/H_i]$ for a set of subgroups $H_i \in \cF$ satisfying the following condition: if $H \in \cF $, then $H^g \leq H_i$ for some $g\in G$ and $i \in I$.
\end{proof}

Now, we define the concept of relative projectivity.

\begin{definition}\label{def:F-projective}
An $RG$-module $P$ is called $\cF$-{\it projective} if for every $\cF$-split sequence of $RG$-modules $0\to A \to B \to C \to 0$ and an $RG$-module map $\alpha: P \to C$, there is an $RG$-module map $\beta : P \to B$ such that the following diagram commutes
$$\xymatrix{&&&P\ar[d]^-{\alpha}\ar@{-->}[dl]_-{\beta} \\
0\ar[r]&A\ar[r]&B\ar[r]^{\pi}&C\ar[r]&0\ .}$$
\end{definition}

Given a $G$-set $X$, we say $X$ is $\cF$-free if for every $x$ in $X$ the isotropy subgroup $G_x$ belongs to $\cF$. 
An $RG$-module $F$ is called an $\cF$-free module if it is isomorphic to a permutation module $RX$ where $X$ is an $\cF$-free $G$-set. Note that an $\cF$-free $RG$-module is isomorphic to a direct sum of the form $\oplus _i R[G/H_i]$ where $H_i \in \cF$ for all $i$.

\begin{proposition}\label{pro:proj is direct summand of free}
An $RG$-module $M$ is $\cF$-projective if and only if it is a direct summand of an $RG$-module of the form $N \otimes _R RX$ where $RX$ is an $\cF$-free module and $N$ is an $RG$-module.
\end{proposition}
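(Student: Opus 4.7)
The plan is to prove the two implications separately, using an explicit $\cF$-free permutation module for the ``only if'' direction and the decomposition of $\cF$-free modules into induced modules for the ``if'' direction.

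For the ``only if'' direction, I would take $X=\bigsqcup_{H\in\cF} G/H$. This $X$ is $\cF$-free since every isotropy subgroup is a conjugate of some $H\in\cF$ and $\cF$ is closed under conjugation, and moreover $X^K\neq\emptyset$ for every $K\in\cF$ because $K$ fixes the coset $eK$ inside the $G/K$ summand. The augmentation $\ep: RX\to R$ sending every element of $X$ to $1$ admits, for each $H\in\cF$, an $RH$-splitting given by $1\mapsto x$ for any choice of $H$-fixed point $x\in X^H$, so the short exact sequence $0\to\ker\ep\to RX\to R\to 0$ is $\cF$-split. Tensoring with $M$ over $R$ preserves $\cF$-splitness, since an $RH$-linear splitting $s:R\to RX$ yields the $RH$-linear splitting $\id_M\otimes s: M\to M\otimes_R RX$. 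Hence the sequence
$$0\to M\otimes_R\ker\ep\to M\otimes_R RX\to M\to 0$$
is $\cF$-split, and by $\cF$-projectivity of $M$ the identity map $\id_M$ lifts to a splitting, exhibiting $M$ as a direct summand of $M\otimes_R RX$ (with $N=M$).

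For the ``if'' direction, I would first note the standard fact that direct summands of $\cF$-projective modules are $\cF$-projective (given a lifting problem, extend by zero on the complementary summand, lift, and restrict). It then suffices to show that $N\otimes_R RX$ is itself $\cF$-projective whenever $RX$ is $\cF$-free. Writing $RX\cong\bigoplus_i R[G/H_i]$ with $H_i\in\cF$ and using the standard identification $N\otimes_R R[G/H]\cong\Ind_H^G\Res_H^G N$ via $n\otimes gH\mapsto g\otimes g^{-1}n$, the problem reduces to proving that $\Ind_H^G L$ is $\cF$-projective for every $H\in\cF$ and every $RH$-module $L$. For this, given an $\cF$-split sequence $0\to A\to B\to C\to 0$, Frobenius reciprocity converts a lifting problem for $\Ind_H^G L$ into a lifting problem for $L$ against the restriction of the sequence to $H$, which splits as an $RH$-sequence by the definition of $\cF$-split; the lift therefore exists.

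There is no serious obstacle in either direction: the only points requiring a little care are verifying that $M\otimes_R(-)$ preserves $\cF$-splitness, which follows by tensoring the explicit splittings with $\id_M$, and checking the isomorphism $N\otimes_R R[G/H]\cong\Ind_H^G\Res_H^G N$ of $RG$-modules with the diagonal left action, which is a routine verification. Both ingredients are classical and the proposition follows by assembling them as above.
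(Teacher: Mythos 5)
Your proposal is correct and follows essentially the same route as the paper: tensor the augmentation sequence for an $\cF$-free module $RX$ with $X^K\neq\emptyset$ for all $K\in\cF$ by $M$ to split off $M$, and for the converse reduce via $N\otimes_R R[G/H]\cong\Ind_H^G\Res_H^G N$ and Frobenius reciprocity. You simply spell out two points the paper leaves implicit (why tensoring by $M$ preserves $\cF$-splitness, and why direct summands of $\cF$-projectives are $\cF$-projective), and you fix a concrete choice $X=\bigsqcup_{H\in\cF}G/H$; none of this changes the argument.
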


\begin{proof} Let $X$ be a $G$-set with the the property that $X^H \neq \emptyset$ if and only if $H \in \cF$. Then the sequence $0 \to \ker \ep \maprt{} RX \maprt{\ep} R \to 0$ where $\ep (\sum a_x x)=\sum a_x$ is an $\cF$-split sequence since its restriction to any subgroup $H \in \cF$ splits. Tensoring this sequence with $M$, we get an $\cF$-split sequence $$ 0 \to M \otimes_R
\ker \ep \to M \otimes_R RX \to M \to 0.$$ If $M$ is $\cF$-projective, then this sequence splits and hence $M$ is a direct summand of $M \otimes_R RX$. For the converse, it is enough to show that an $RG$-module of the form $N \otimes_R RX$ is projective. If $RX=\oplus _i R[G/H_i]$, then $$N\otimes_R RX\cong \oplus _i \Ind_{H_i} ^G \Res ^G _{H_i} N.$$ So, we need to show that for every $H \in \cF$, an $RG$-module of the form $\Ind _{H_i} ^G \Res ^G _{H_i} N$ is $\cF$-projective. This follows from Frobenious reciprocity (see 
\cite[Corollary 2.4]{nucinkis1} for more details).
\end{proof}

Note that in the argument above, we have seen that for every
$RG$-module $M$, there is an $\cF$-split surjective map $ M \otimes _RRX \to M$ where $M \otimes_R RX$ is an $\cF$-projective module. Inductively taking such maps, we obtain a projective resolution of $M$ formed by $\cF$-projective modules. Note that each short exact sequence appearing in the construction is $\cF$-split. The resolutions that satisfy this property are given a special name.

\begin{definition}\label{def:F-projective resolution} Let $M$ be an $RG$-module.
A relative $\cF$-projective resolution $P_*$ of $M$ is
an exact sequence of the from $$ \cdots \to P_n \maprt{\bd _n }
P_{n-1} \to \cdots \to P_2 \maprt{\bd _2} P_1 \maprt{\bd _1} P_0 \maprt{\bd _0} M \to 0$$ where for
each $n\geq 0$, the $RG$-module $P_n$ is $\cF$-projective and the
short exact sequences $$0 \to \ker \bd _{n} \to P_n \to \image \bd _n
\to 0$$ are $\cF$-split.
\end{definition}

In \cite[Lemma 2.7]{nucinkis1}, it is shown that there is a version of Schanuel's lemma for $\cF$-split sequences. This follows from the fact that the class of $\cF$-split exact
sequences is proper. Note that the concept of relative projective resolution is the same as proper projective resolutions for the class of $\cF$-split exact sequences. Thus, we have the following:

\begin{proposition}\label{pro:uniqueness} Let $M$ be an $RG$-module. Then, any two relative $\cF$-projective resolutions of $M$ are chain homotopy equivalent.
\end{proposition}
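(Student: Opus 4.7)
The plan is to mimic the classical uniqueness proof for ordinary projective resolutions, with ``projective'' replaced by ``$\cF$-projective'' and ``exact'' replaced by ``$\cF$-split exact.'' The key step is to first establish a relative comparison theorem: for any $RG$-module map $\varphi : M \to M'$, any relative $\cF$-projective resolution $P_* \to M$, and any relative $\cF$-projective resolution $Q_* \to M'$, there exists a chain map $\tilde\varphi : P_* \to Q_*$ lifting $\varphi$, and any two such lifts are chain homotopic. Proposition \ref{pro:uniqueness} then falls out by applying this comparison to $\id_M$ on two given resolutions $P_*$ and $Q_*$ of $M$: one obtains chain maps $f : P_* \to Q_*$ and $g : Q_* \to P_*$ both lifting $\id_M$, and since $gf$ and $\id_{P_*}$ are two lifts of $\id_M$ on $P_*$, the uniqueness-up-to-homotopy clause forces $gf \simeq \id_{P_*}$; symmetrically $fg \simeq \id_{Q_*}$.

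I would construct $\tilde\varphi$ by induction on degree. In degree zero, we must lift $\varphi \circ \bd_0^P : P_0 \to M'$ through the augmentation $\bd_0^Q : Q_0 \twoheadrightarrow M'$; this surjection sits in the $\cF$-split short exact sequence $0 \to \ker \bd_0^Q \to Q_0 \to M' \to 0$ guaranteed by Definition \ref{def:F-projective resolution}, so since $P_0$ is $\cF$-projective, the lifting property of Definition \ref{def:F-projective} produces $\tilde\varphi_0$. In degree $n > 0$, observe that $\tilde\varphi_{n-1}\bd_n^P$ lands in $\ker \bd_{n-1}^Q = \image\bd_n^Q$, and the short exact sequence $0 \to \ker \bd_n^Q \to Q_n \to \image \bd_n^Q \to 0$ is $\cF$-split by Definition \ref{def:F-projective resolution}; since $P_n$ is $\cF$-projective, the same lifting property supplies $\tilde\varphi_n : P_n \to Q_n$ with $\bd_n^Q \tilde\varphi_n = \tilde\varphi_{n-1}\bd_n^P$.

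To compare two lifts $\tilde\varphi, \tilde\varphi'$, I would build a chain homotopy $s_n : P_n \to Q_{n+1}$ degree by degree using the identical mechanism: the difference $\tilde\varphi_n - \tilde\varphi_n' - s_{n-1}\bd_n^P$ maps into $\image\bd_{n+1}^Q$, and one lifts through the $\cF$-split surjection $Q_{n+1} \twoheadrightarrow \image\bd_{n+1}^Q$ using the $\cF$-projectivity of $P_n$.

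The main obstacle is essentially bookkeeping in the inductive step — there is no novel idea beyond the observation already recorded in Definition \ref{def:F-projective resolution} that each syzygy sequence of a relative $\cF$-projective resolution is $\cF$-split. This is precisely what is needed to activate Definition \ref{def:F-projective} in the lifting argument, and it is exactly the ingredient missing in an arbitrary projective (non-resolution) setting; once it is in place, every step of the classical comparison proof goes through verbatim.
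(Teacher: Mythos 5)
Your proof is correct, and it is genuinely different from the one in the paper. The paper treats Proposition \ref{pro:uniqueness} essentially as a citation: it notes that the class of $\cF$-split exact sequences is a \emph{proper class} in the sense of relative homological algebra (citing Nucinkis), observes that relative $\cF$-projective resolutions are precisely proper projective resolutions for that class, and then invokes the general comparison/Schanuel package for proper classes. You instead run the comparison theorem by hand: in each degree you apply the lifting property of Definition \ref{def:F-projective} to the $\cF$-split surjection $Q_n \twoheadrightarrow \image\bd_n^Q$ supplied by Definition \ref{def:F-projective resolution}, and the same mechanism produces the chain homotopy between two lifts. Your argument is self-contained and elementary, and it makes explicit exactly which hypothesis is doing the work (every syzygy sequence being $\cF$-split, so that $\cF$-projectivity can be invoked); the paper's route is shorter and generalizes painlessly to other proper classes, but it outsources the real content to the theory of proper classes and a reference. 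One small bookkeeping point worth spelling out if you write this up: the degree-zero case of the syzygy condition in Definition \ref{def:F-projective resolution} is $0 \to \ker\bd_0 \to P_0 \to M \to 0$ being $\cF$-split (since $\image\bd_0 = M$), which is exactly what your base case uses, so the induction does start correctly.
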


We can now define the relative cohomology of a group as follows:

\begin{definition}\label{def:relative cohomology} Let $G$ be a finite group and $\cF$ be a family of subgroups
of $G$. For every $RG$-module $M$ and for each $n\geq 0$, the $n$-th relative cohomology of $G$ is defined as the cohomology group $$\cF H^n (G,M):=H^n (\Hom _{RG} (P_* , M)$$
where $P_*$ is a relative $\cF$-projective resolution of $R$.
\end{definition}

If $\cF$ is a collection of subgroups of $G$ which is not
necessarily closed under conjugation and taking subgroups, we can still define cohomology relative to this family in the following way. Let $\overline \cF$ be a family defined by $$\overline \cF =\{K \leq G \ | \ K^g \leq H \text{ for some } g\in G \text{ and } H\in\cF \}.$$ We call $\overline \cF$ the subgroup closure of $\cF$. Then, relative cohomology with respect to $\cF$ is defined in the following way:

\begin{definition}\label{def:relative cohomology2}
Let $G$ be a finite group and $\cF$ be a collection
of subgroups of $G$. For a $RG$-module $M$, the relative cohomology
of $G$ with respect to $\cF$ is defined by
$$\cF H^n (G, M):=\overline \cF H^n (G, M)$$
where $\overline \cF$ is the subgroup closure of $\cF$.
\end{definition}

This definition makes sense since a short exact sequence is $\cF$-split if and only if it is $\overline \cF$-split. So, the corresponding proper categories are equivalent. Note that when $\cF=\{ H\}$, the definition above coincides with the
definition of cohomology of a group relative to a subgroup $H$ (see \cite[Section 3.9]{benson1}).

\section{Ext-groups over the orbit category}
\label{sect:orbitcategory}

Let $G$ be a finite group and $\cF$ be a family of subgroups of $G$. As before, we assume that $\cF$ is closed under conjugation and taking subgroups. The orbit category $\Or_{\cF}(G)$ of $G$ relative to $\cF$ is defined as the category whose objects are orbits of the form $G/H$ with $H \in \cF$ and whose morphisms from $G/H$ to $G/K$ are given by set of $G$-maps $G/H\to G/K$. We denote the orbit category $\OrG$ by $\G$ to simplify the notation. In fact, for almost everything about orbit categories we follow the notation and terminology in \cite{lueck}.

Let $R$ be a commutative ring. An $\RG$-module is a
contravariant functor from $\G$ to the category of $R$-modules. An $\RG$-module $M$ is sometimes called a coefficient system and used in the definition of Bredon cohomology as coefficients. Since an $\RG$-module is a functor onto an abelian category, the category of $\RG$-modules is an abelian category and the usual tools for doing homological algebra are available. In particular, a sequence $M'\lra M\lra M''$ of $R\G$-modules is exact if and only if $$ M'(H)\lra M(H)\lra M''(H)$$ is an exact sequence of $R$-modules for every $H\in\cF$. The notions of submodule, quotient module, kernel, image, and cokernel are defined objectwise.  The direct sum of $R\G$-modules is given by taking the usual direct sum objectwise. The Hom functor has the following description.

\begin{definition}
Let $M,N$ be $R\G$-modules. Then,
$$\Hom_{R\G}(M,N)\subseteq\bigoplus_{H\in\cF}
\Hom_{R}(M(H),N(H))$$ is the $R$-submodule of morphisms $f_H: M(H) \to N(H)$ satisfying the relation $f_K\circ M(\varphi) =N(\varphi) \circ f_H$
for every morphism $\varphi:G/K\lra G/H$.
\end{definition}

Recall that by the usual definition of projective modules, an
$\RG$-module $P$ is projective if and only if the functor
$\Hom_{R\G}(P,-)$ is exact.

\begin{lemma} For each $K\in\cF$, let $P_K$ denote the
$\RG$-module defined by
$$P_K(G/H)=R\Mor(G/H,G/K)$$ where $R\Mor(G/H,G/K)$ is the free
abelian group on the set $\Mor(G/H,G/K)$ of all morphisms $G/H \to G/K$. Then, $P_K$ is a
projective $\RG$-module.
\end{lemma}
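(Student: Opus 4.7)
The plan is to prove this via a Yoneda-type argument. First I would establish a natural isomorphism
$$\Hom_{R\G}(P_K, N) \cong N(G/K)$$
for every $R\G$-module $N$. Once this is done, projectivity of $P_K$ follows because the functor $N \mapsto N(G/K)$ is exact (exactness of sequences of $R\G$-modules was already observed to be detected objectwise), and hence the composite functor $\Hom_{R\G}(P_K,-)$ is exact.

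To set up the isomorphism, I would use that a morphism $f : P_K \to N$ is a family of $R$-linear maps $f_H : R\Mor(G/H, G/K) \to N(G/H)$, one for each $H \in \cF$, subject to the naturality constraint. Since $P_K(G/H)$ is \emph{free} on the set $\Mor(G/H,G/K)$, the map $f_H$ is determined by its values on morphisms $\varphi : G/H \to G/K$. For any such $\varphi$, viewing $\varphi = P_K(\varphi)(\id_{G/K})$, naturality forces
$$f_H(\varphi) = N(\varphi)\bigl(f_K(\id_{G/K})\bigr).$$
Thus $f$ is completely determined by the single element $f_K(\id_{G/K}) \in N(G/K)$. Conversely, given any $n \in N(G/K)$, the formula $f_H(\varphi) = N(\varphi)(n)$ extended $R$-linearly defines a natural transformation $P_K \to N$; checking naturality with respect to an arbitrary $\psi : G/H' \to G/H$ reduces to the functoriality identity $N(\varphi \circ \psi) = N(\psi) \circ N(\varphi)$, which holds since $N$ is a functor. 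This gives the claimed bijection, and it is manifestly natural in $N$.

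Finally, I would observe that the evaluation functor $\mathrm{ev}_{G/K} : R\G\text{-Mod} \to R\text{-Mod}$, $N \mapsto N(G/K)$, preserves short exact sequences, because a sequence $N' \to N \to N''$ of $R\G$-modules is exact if and only if $N'(H) \to N(H) \to N''(H)$ is exact for every $H \in \cF$. Combining this with the natural isomorphism $\Hom_{R\G}(P_K, -) \cong \mathrm{ev}_{G/K}(-)$ shows that $\Hom_{R\G}(P_K,-)$ is exact, which is the definition of $P_K$ being projective.

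The only real subtlety to watch is keeping track of the contravariance of $P_K$: for $\varphi : G/H \to G/K'$ the induced map $P_K(\varphi) : P_K(G/K') \to P_K(G/H)$ is post-composition-by-$\varphi$ in the opposite direction, and one must be careful that the identity $\id_{G/K} \in P_K(G/K)$ is the universal element whose image under $P_K(\varphi)$ recovers $\varphi$ itself. Once this bookkeeping is set up correctly, no further computation is needed — the result is a direct translation of the Yoneda lemma into the $R$-linear functor-category setting.
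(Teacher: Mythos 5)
Your proof is correct and follows essentially the same approach as the paper: establish the Yoneda-type natural isomorphism $\Hom_{R\G}(P_K,N)\cong N(K)$, then observe that exactness of sequences of $R\G$-modules is detected objectwise, so $\Hom_{R\G}(P_K,-)$ is exact and hence $P_K$ is projective. The paper dismisses the isomorphism as ``easy to see,'' whereas you spell out the Yoneda argument in full detail; the underlying reasoning is identical.
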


\begin{proof} It is easy to see that for each $\RG$-module $M$, we have
$\Hom_{R\G}(P_K,M)\cong M(K).$ Since the exactness is defined
objectwise, this means the functor $\Hom _{R\G} (P_K, -)$ is exact. Hence we can conclude that $P_K$ is projective.
\end{proof}

The projective module $P_K$ is also denoted by $\uR{G/K}$ since $P_K(G/H)\cong R[(G/K)^H]$. One often calls $\uR{G/K}$ a free $R\G$-module since all
the projective $\RG$-modules are summands of some direct sum of modules of the form $\uR{G/K}$.

For an $\RG$-module $M$, there exists a surjective map
$$P=\bigoplus_{H\in\cF}(\bigoplus_{m \in B_H} P_H)\twoheadrightarrow M$$
where $B_H$ is a basis of $M(H)$ as an $R$-module. The kernel of this surjective map is again an $R\G$-module and we can find a surjective map of a projective module onto the kernel. Thus, every $R\G$-module $M$ admits a projective resolution $$ \cdots \to P_n\to P_{n-1} \to \cdots \to P_2 \to P_1 \to P_0 \to M.$$ By standard methods in homological algebra we can show that any two projective resolutions of $M$ are chain homotopy equivalent.

The $\RG$-module category has enough injective modules as well and for given $\RG$-modules $M$ and $N$, the ext-group $\Ext _{\RG} ^n (M, N)$ is defined as the $n$-th cohomology of the cochain complex $\Hom _{\RG} (M, I^* )$ where $N \to I^*$ is an injective resolution of $N$. Since we also have enough
projectives, the ext-group $\Ext _{\RG} ^n (M, N)$ can also be
calculated using a projective resolution of $M$. We have the following:

\begin{proposition}\label{pro:Ext groups via proj res} Let $M$ and $N$ are $\RG$-modules. Then, for each $n\geq 0$, we have $$\Ext _{\RG}^n (M, N)
\cong H^n (\Hom _{\RG}(P_*, N))$$ where $P_*$ is a projective resolution of $M$ as an $R\G$-module.
\end{proposition}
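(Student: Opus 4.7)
The statement is the standard balancing-of-Ext theorem in an abelian category with enough projectives and enough injectives. The excerpt has just established that the category of $\RG$-modules is abelian, has enough projectives (constructed explicitly from the free modules $\uR{G/K}$) and has enough injectives, so the general homological machinery applies verbatim. The plan is to run the usual double complex argument.

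Concretely, fix a projective resolution $P_* \to M$ and an injective resolution $N \to I^*$ of $\RG$-modules, and form the first-quadrant double complex
$$C^{p,q} = \Hom_{\RG}(P_p, I^q),$$
with total complex $T^* = \mathrm{Tot}(C^{*,*})$. I then compare the two spectral sequences associated to the two filtrations of $T^*$. Filtering by one direction: since each $I^q$ is injective, $\Hom_{\RG}(-,I^q)$ is exact, so taking cohomology of $C^{*,q}$ in the $p$-direction gives $\Hom_{\RG}(M,I^q)$ concentrated in $p=0$ (using exactness of $P_* \to M$ at positive degrees). The spectral sequence therefore collapses at the $E_2$-page and converges to $H^q(\Hom_{\RG}(M,I^*)) = \Ext^q_{\RG}(M,N)$. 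Filtering by the other direction: since each $P_p$ is projective, $\Hom_{\RG}(P_p,-)$ is exact, so taking cohomology of $C^{p,*}$ in the $q$-direction gives $\Hom_{\RG}(P_p,N)$ concentrated in $q=0$ (using exactness of $N \to I^*$ at positive degrees). This spectral sequence also collapses and converges to $H^p(\Hom_{\RG}(P_*,N))$. Since both abutments equal $H^*(T^*)$, the claimed isomorphism follows.

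There is no real obstacle: every ingredient is either definitional (projectivity/injectivity as exactness of the associated Hom-functor) or already established in the excerpt (existence of both kinds of resolutions, and chain homotopy uniqueness of projective resolutions, which in turn guarantees that the resulting isomorphism is independent of the chosen $P_*$ and natural in both arguments). An alternative presentation, which I would only include if the double-complex route feels too heavy for the target reader, is the universal $\delta$-functor argument: both $\Ext^*_{\RG}(-,N)$ and $H^*(\Hom_{\RG}(P_*,N))$ are contravariant cohomological $\delta$-functors on $\RG$-modules (the long exact sequence for the latter coming from the horseshoe lemma), they agree in degree $0$, and both vanish in positive degrees on projectives, so Grothendieck's uniqueness theorem produces the natural isomorphism.
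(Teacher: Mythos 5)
Your proposal is correct and takes exactly the same route as the paper: the paper's proof also forms the double complex $\Hom_{\RG}(P_*, I^*)$, filters it two ways, and compares the resulting spectral sequences (the balancing theorem). You simply spell out the collapse of each spectral sequence in more detail and add an optional $\delta$-functor alternative, but the core argument is identical.
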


\begin{proof} This follows from the balancing theorem in homological algebra. Take an injective resolution $I^*$ for $N$ and consider the double complex $\Hom _{\RG} (P_*, I^*)$. Filtering this double complex in two different ways and by calculating the corresponding spectral sequences, we get the desired isomorphism.
\end{proof}

When $\cF=\{1\}$, the ext-group $\Ext_{\RG}^n (M, N)$ is the same as the usual ext-group $$\Ext ^n _{RG} (M (1), N(1))$$ over the group ring $RG$. So, the ext-groups over group rings, and hence the group cohomology, can be expressed as the ext-group over the orbit category for some suitable choices of $M$ and $N$. In the rest of the section we prove Theorem \ref{thm:main 1; computational tool} which says that this is also true for the relative cohomology of a group. 

Let $\un R$ denote the $\RG$-module which takes the value $\un R(H)=R$ for every $H \in \cF$ and such that for every $f: G/K\to G/H$, the induced map $\underline R(f): \un R (H)\to \un R (K)$ is the identity map. Given $\RG$-modules $M$ and $N$, the tensor product of $M$ and $N$ over $R$ is defined as the $\RG$-module such that for all $H \in\cF$, $$ (M\otimes _R N )(H)= M(H)\otimes _R N(H)$$ and the induced map is $(M \otimes _R N)(f)=M(f)\ot _R N(f)$ for every $f: G/K \to G/H$. Note that the module $\un R$ is the identity element
with respect to tensoring over $R$, i.e, $M \ot _R \un R=\un R \ot_R M =M$ for every $\RG$-module $M$. We also have the following:

\begin{lemma}\label{lem:proj tensor proj}
If $P$ and $Q$ are projective $\RG$-modules, then $P \ot _R Q$ is also projective.
\end{lemma}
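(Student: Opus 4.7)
The plan is to reduce to the case where $P$ and $Q$ are free $R\G$-modules of the form $P_H = \uR{G/H}$, and then verify directly that the tensor product of two such modules is again free. This reduction works because every projective $R\G$-module is a direct summand of a direct sum of modules $P_H$ with $H \in \cF$, and the objectwise tensor product $\ot_R$ distributes over direct sums and preserves direct summands. So it is enough to prove that $P_H \ot_R P_K$ is projective for any $H, K \in \cF$.

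Next I would compute the values of $P_H \ot_R P_K$ explicitly. Using the natural identification $\Mor(G/L, G/H) \cong (G/H)^L$ mentioned after the definition of $P_K$, we obtain
$$(P_H \ot_R P_K)(G/L) \;=\; R[(G/H)^L] \ot_R R[(G/K)^L] \;\cong\; R\bigl[(G/H \times G/K)^L\bigr].$$
Decompose the $G$-set $G/H \times G/K$ into $G$-orbits as $G/H \times G/K = \disjointunion_{i} G/L_i$. Each isotropy subgroup $L_i$ is the stabilizer of a point $(gH, g'K)$, hence equals $gHg^{-1} \cap g'Kg'^{-1}$, which is a subgroup of $gHg^{-1}$. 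Since $\cF$ is closed under conjugation and taking subgroups, every such $L_i$ belongs to $\cF$.

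Because everything on both sides is natural in $G/L$ (all the structural maps coming from morphisms $G/L' \to G/L$ are induced by the same $G$-maps on $G/H \times G/K$), this identification promotes to an isomorphism of $R\G$-modules
$$P_H \ot_R P_K \;\cong\; \bigoplus_i \uR{G/L_i},$$
which is a free $R\G$-module and in particular projective. Combining with the reduction in the first paragraph completes the proof.

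The main obstacle in this argument is really just a bookkeeping one: ensuring the orbit-decomposition identification is natural with respect to the morphisms of $\G$, and verifying that each stabilizer $L_i$ lies in $\cF$. The latter is exactly the place where both closure properties of $\cF$ (under conjugation and under taking subgroups) are used, and without either one the conclusion could fail.
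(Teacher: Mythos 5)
Your proof is correct and follows essentially the same route as the paper: reduce to the free modules $\uR{G/H}$, identify $\uR{G/H}\ot_R\uR{G/K}$ with the $R\G$-module coming from the $G$-set $G/H\times G/K$, and observe that the isotropy subgroups of that product stay in $\cF$ by the closure hypotheses. The paper simply records the orbit decomposition via double cosets, $\bigoplus_{HgK\in H\backslash G/K}\uR{G/(H\cap \leftexp{g}K)}$, which is the same decomposition you describe.
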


\begin{proof} Since every projective module is a direct summand of a free module $\oplus_i \uR{G/H_i}$, it is enough to prove this statement for module of type $\uR{G/H}$. Since $$\uR{G/H}\ot _R \uR{G/K}=\bigoplus _{HgK\in H\backslash G/K} \uR{G/ (H \cap \leftexp{g} K)}$$ and since $\cF$ is closed under conjugations and taking subgroups, this tensor product is also projective.
\end{proof}

This is used in the proof of the following proposition.

\begin{proposition}[Theorem 3.2, \cite{nucinkis2}]\label{pro:proj resolution at G/1}
Let $P_*$ be a projective resolution of $\un R$ as an
$\RG$-module. Then, $P_*(1)$ is a relative $\cF$-projective resolution of the trivial $RG$-module $R$.  
\end{proposition}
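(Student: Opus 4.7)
The plan is to verify three defining properties for $P_*(1)$ to be a relative $\cF$-projective resolution of $R$: exactness, $\cF$-projectivity of each term, and $\cF$-splitness of the constituent short exact sequences.

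Exactness follows because sequences of $R\G$-modules are exact iff they are so objectwise: evaluating the projective resolution $P_*\to \un R\to 0$ at $G/1$ gives an exact augmented complex of $RG$-modules with $\un R(G/1)=R$. For $\cF$-projectivity of $P_n(1)$, $P_n$ is a direct summand of a free $R\G$-module $\bigoplus_i\uR{G/H_i}$ with $H_i\in \cF$, and $\uR{G/H_i}(G/1)=R[(G/H_i)^1]=R[G/H_i]$, so $P_n(1)$ is a direct summand of the $\cF$-free $RG$-module $\bigoplus_i R[G/H_i]$, hence $\cF$-projective by Proposition \ref{pro:proj is direct summand of free}.

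The main step is showing the short exact sequence $0\to Z_{n+1}(1)\to P_n(1)\to Z_n(1)\to 0$ is $\cF$-split, where $Z_n=\ker \partial_{n-1}$. My plan is to reformulate $\cF$-splitness as $X$-splitness via Proposition \ref{pro:F-split vs X-split}, then tensor the $R\G$-sequence with $\uR{G/H}$ for each $H\in \cF$: tensoring over $R$ preserves exactness (since $\uR{G/H}$ is $R$-free), and by Lemma \ref{lem:proj tensor proj} the middle term $P_n\otimes_R\uR{G/H}$ remains projective in $R\G$-mod. Using the identification $(M\otimes_R\uR{G/H})(G/1)=M(G/1)\otimes_R R[G/H]$, the evaluated tensored sequence matches the $RG$-sequence whose splitness we need. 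A useful auxiliary observation is that the natural map $M(H)\to M(G/1)^H$ is an isomorphism for projective $R\G$-modules $M$ (check on free modules via $R[(G/K)^H]\cong R[G/K]^H$), and this property propagates inductively to each kernel $Z_n$ since $(-)^H$ is left exact; at a minimum this yields exactness of the $H$-invariants of the evaluated sequence.

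The main obstacle will be turning this information into an actual $RH$-splitting: surjectivity of the map on $H$-invariants is necessary but not sufficient for an $RH$-section to exist, so the argument must genuinely exploit that $P_n$ is projective in $R\G$-mod rather than an arbitrary module satisfying the fixed-point property. I expect the cleanest completion uses the adjunction between evaluation at $G/1$ and the fixed-point functor $N\mapsto N^?$---which sends free $R\G$-modules to $\cF$-free $RG$-modules and is an adjoint pair because $\Hom_{R\G}(\uR{G/K},N^?)=N^K=\Hom_{RG}(R[G/K],N)$ by Frobenius---combined with the Yoneda description $\Hom_{R\G}(\uR{G/K},M)\cong M(K)$, to produce the $RH$-sections explicitly summand-by-summand from the free cover of $P_n$.
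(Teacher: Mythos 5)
You have the right ingredients, and your plan through ``tensor with $\uR{G/H}$ and invoke Lemma~\ref{lem:proj tensor proj}'' matches the paper's argument. But you stop one observation short and then drift into a dead end.

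The missing observation is this: after tensoring the whole augmented resolution $\cdots\to P_1\to P_0\to\un R\to 0$ with $\uR{G/H}$, \emph{every} term is projective, not only the middle ones. You note that $P_n\ot_R\uR{G/H}$ is projective, but the augmentation term $\un R\ot_R \uR{G/H}=\uR{G/H}$ is itself free. Since $\uR{G/H}$ is objectwise $R$-free, the tensored complex is still exact, so it is an exact, bounded-below complex of projective $R\G$-modules. Such a complex is split exact: starting from the surjection $P_0\ot_R\uR{G/H}\twoheadrightarrow \uR{G/H}$ onto a projective, one gets inductively that each cycle module $\ker(\bd_n\ot\id)$ is a projective direct summand, so each constituent short exact sequence of $R\G$-modules splits. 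Evaluating a split $R\G$-sequence at $G/1$ yields a split $RG$-sequence, which by Proposition~\ref{pro:F-split vs X-split} is precisely the $\cF$-splitness of $0\to\ker\bd_n\to P_n(1)\to\image\bd_n\to 0$. That finishes the proof; no further machinery is needed, and in particular you never need to split a single short exact sequence ``by hand''---the splitting of the whole tensored complex is what you get to use.

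The detour through the comparison map $M(H)\to M(1)^H$ does not work and contains a false auxiliary claim. Take $M=\uR{G/1}$ and any nontrivial $H\in\cF$: then $M(H)=R[(G/1)^H]=0$, while $M(1)^H=(RG)^H\neq 0$, so the map is not an isomorphism even on free $R\G$-modules; the identification $R[(G/K)^H]\cong R[G/K]^H$ fails because $H$-fixed \emph{vectors} in a permutation module (orbit sums) are more than $H$-fixed \emph{cosets}. Moreover, as you yourself flag, exactness on $H$-invariants cannot manufacture an $RH$-section, and the adjunction/Yoneda bookkeeping you sketch does not supply one either. The splitting must be, and is, produced at the level of $R\G$-modules, as a consequence of the tensored complex being contractible.
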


\begin{proof}
If we apply $-\otimes_R \uR{G/H}$ to the resolution $P_*\to \un R$, then we get $$\cdots \to P_n \ot _R \uR{G/H} \maprt{\bd_n\ot \id} P_{n-1}\otimes_R \uR{G/H} \to \cdots \to P_0 \otimes_R \uR{G/H} \maprt{\bd _0 \otimes \id}  \uR{G/H} \to 0.$$ By Lemma \ref{lem:proj tensor proj}, all the modules in this sequence are projective. So, the sequence splits. This means that for every $n\geq 0$, the short exact sequence $$ 0 \to \ker (\bd _n \ot \id) \to P_n \ot _R \uR{G/H} \to \image (\bd _n \ot \id) \to 0$$ splits. If we evaluate this sequence at $1$, we get a split sequence of $RG$-modules. This implies that the sequence $$ 0 \to \ker \bd_n \to P_n (1) \to \image \bd _n \to 0$$ is $\cF$-split for all $n\geq 0$. Note also that $P_n (1)$ is a direct summand of $F(1)$ for some free $\RG$-module $F$. So, by
Proposition \ref{pro:proj is direct summand of free}, the
$RG$-module $P_n (1)$ is $\cF$-projective. Hence, the resolution
$$\cdots \to P_n (1) \maprt{\bd_n} P_{n-1}(1) \to \cdots \to P_1 (1) \maprt{\bd _1} P_0 (1) \maprt{\bd _0} R \to 0 $$ is a relative $\cF$-projective resolution of
$R$.
\end{proof}

Now, recall that for every $RG$-module $M$, there is an $R\G$-module denoted by $M^{?}$ which takes the value $M^H$ for every $H \in \cF$ where $M^H$ denotes the $R$-submodule $$M^H=\{ m \in M \ | \ hm=m \text{ for all } h \in H \}$$ of $M$.
Note that $M^H\cong \Hom _R (R[G/H], M )$. In fact, we can choose a canonical isomorphism and we can think of the element  $m\in M^H$ as an $R$-module homomorphisms $R[G/H]\to M$ which takes $H$ to $m$. For each
$G$-map $f: G/K \to G/H$, the induced map $M(f) : M^H \to M^K$ is defined as the composition of corresponding homomorphisms with the linearization of $f$ which is
$Rf: R[G/K]\to R[G/H]$. The module $M^?$ has
the following important property:

\begin{lemma}\label{lem:important property of M^?}
Let $M$ be an $RG$-module and $M^?$ be the $\RG$-module defined above. For any projective $\RG$-module $P$, we have
$$\Hom_{\RG}(P ,M^?)\cong\Hom_{RG}(P(1),M).$$
\end{lemma}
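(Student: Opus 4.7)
The plan is to construct an explicit natural isomorphism by restricting morphisms to the object $G/1 \in \G$. Note that $1 \in \cF$ since $\cF$ is closed under taking subgroups (and, as usual, assumed nonempty), so $G/1$ is an object of $\G$. The key observation is that the group of $G$-automorphisms of $G/1$ is canonically $G$ itself (acting by right multiplication), so the values $P(1)$ and $M^?(1) = M$ come with canonical $RG$-module structures, and any natural transformation $f : P \to M^?$ produces at $G/1$ an $RG$-linear map $f_{G/1}: P(1) \to M$.

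First, I would define
$$\Phi_P : \Hom_{R\G}(P, M^?) \longrightarrow \Hom_{RG}(P(1), M), \qquad f \longmapsto f_{G/1},$$
and verify that the image really is $RG$-linear by unpacking the naturality square of $f$ along automorphisms $G/1 \to G/1$, checking that the $RG$-structure on $M^?(1)$ coming from $\Aut_\G(G/1) = G$ matches the given $RG$-structure on $M$. Naturality of $\Phi_P$ in $P$ is automatic from the construction.

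Next, I would check that $\Phi_P$ is an isomorphism when $P$ is a free $R\G$-module of the form $\uR{G/K}$ with $K \in \cF$. Applying the preceding lemma (which is really the Yoneda lemma for functor categories) to $N = M^?$ gives
$$\Hom_{R\G}(\uR{G/K}, M^?) \;\cong\; M^?(G/K) \;=\; M^K,$$
via $f \mapsto f_{G/K}(\id_{G/K})$. On the other hand, $\uR{G/K}(1) = R[G/K]$ and the standard adjunction yields $\Hom_{RG}(R[G/K], M) \cong M^K$ via $\varphi \mapsto \varphi(K)$. Tracing through the naturality square of $f$ along a $G$-map $\phi_g : G/1 \to G/K$ sending $1 \mapsto gK$, one finds $f_1(gK) = g \cdot f_{G/K}(\id_{G/K})$, which identifies $\Phi_{\uR{G/K}}$ with the composition of these two isomorphisms; in particular it is an isomorphism.

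Finally, since every projective $R\G$-module $P$ is a direct summand of a module of the form $\bigoplus_i \uR{G/K_i}$ with $K_i \in \cF$, and since both $\Hom_{R\G}(-, M^?)$ and $\Hom_{RG}(-(1), M)$ are additive contravariant functors that turn direct sums into direct products, $\Phi_P$ inherits the isomorphism property. The only step requiring real care is the middle one — checking that the canonical identifications of the two descriptions of $M^K$ agree with the evaluation map $\Phi_{\uR{G/K}}$ — but this reduces to the explicit bookkeeping sketched above.
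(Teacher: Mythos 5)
Your proposal is correct and takes essentially the same approach as the paper: reduce to the free modules $\uR{G/K}$, identify both sides with $M^K$ via Yoneda and the tensor-hom adjunction, and extend by direct summands. The paper's proof is a two-line version of exactly this argument; your extra care in exhibiting the natural transformation as evaluation at $G/1$ and checking compatibility of the two identifications of $M^K$ fills in details that the paper leaves implicit.
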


\begin{proof} It is enough to prove the statement for $P=\uR{G/H}$ for some $H \in \cF$. Note that we have $$\Hom _{\RG} (\uR{G/H}, M^?)\cong M^H \cong \Hom _{RG} (R[G/H], M),$$ so the statement holds in this case.
\end{proof}

Now, we are ready to prove the main theorem of this section.

\begin{proof}[Proof of Theorem \ref{thm:main 1; computational tool}]
Let $P_* \to \un R$ be a projective resolution of $\un R$ as an $\RG$-module. The ext-group $\Ext ^n _{\RG} (\un R, M^?)$ is defined as the $n$-th cohomology of the cochain complex $\Hom _{\RG} (P_*, M^?)$. By Lemma \ref{lem:important property of M^?}, we have $$\Hom
_{\RG } (P_* , M^? )\cong \Hom _{RG } (P_* (1), M)$$ as cochain
complexes. By Proposition \ref{pro:proj resolution at G/1}, the
chain complex $P_* (1)$ is a relative $\cF$-projective resolution. So,
by the definition of relative group cohomology, we get $$ \Ext
^n _{\RG} (\un R, M^? ) \cong \cF H^n (G, M)$$ as desired.
\end{proof}

\section{Periodicity of relative cohomology}
\label{sect:computation}

In this section, we consider the following question: Let $G$ be a finite group of rank $r$ and $\cF$ be the family of all subgroups $H$ of $G$ such that $\rk H \leq r-1$. Then, does there exist an $\cF$-split exact sequence of the form
$$ 0 \to \bbZ \to \bbZ X_n \to \cdots \to \bbZ X_2 \to \bbZ X_1 \to \bbZ X_0 \to \bbZ \to 0$$ where each $X_i$ is a $G$-set with isotropy in $\cF$\,? The existence of such a sequence came up as question in the process of constructing group actions on finite complexes homotopy equivalent to a sphere with a given set of isotropy subgroups. Note that the $\cF$-split condition, in fact, is not necessary for realizing a permutation complex as above by a group action, but having this condition guarantees the existence of a weaker condition that is necessary for the realization of such periodic resolutions by group actions. Note also that for constructions of group actions, algebraic models over the orbit category are more useful than chain complexes of permutation modules. For more details on the construction of group actions on homotopy spheres, see \cite{hpy} and \cite{pamuk}.

The main aim of this section is to show that the answer to the above question is negative. For this, we consider the group $G=\bbZ /2 \times \bbZ /2=\la a_1, a_2\ra$ and take $\cF=\{1, H_1, H_2, H_3\}$ where $H_1=\la a_1 \ra$, $H_2=\la a_1a_2 \ra$, and $H_3=\la a_2\ra.$ Note that if there is an exact sequence of the above form, then by splicing the sequence with itself infinitely many times, we obtain a periodic relative $\cF$-projective resolution of $\bbZ$ as
a $\bbZ G$-module. But, then the relative
cohomology $\cF H^* (G, \bbF _2)$ would be periodic. We explicitly
calculate this relative cohomology and show that it is not periodic, hence prove Theorem \ref{thm:main 2; not periodic}.

From now on, let $G$ and $\cF$ be as above and let
$R=\bbF_2$. By Theorem \ref{thm:main 1; computational tool}, we have $$\cF H^*(G,R)\cong \Ext^*_{\RG}(\underline{R},R^?).$$ Note that $R^?=\underline{R}$, so we need to calculate the ext-groups $\Ext^n_{\RG} (\un R, \un R )$ for each $n\geq 0$. To calculate these ext-groups, we consider some long exact sequences of ext-groups coming from short exact sequences of $\RG$-modules.

Let $R_0$ denote the $\RG$-module where $R_0 (1)=R$ and $R_0
(H_i)=0$ for $i=1,2,3$. Also consider, for each $i=1,2,3$, the
module $R_{H_i}$ which is defined as follows: We have $R_{H_i}
(1)=R_{H_i} (H_i)=R$ with the identity map between them and 
$R_{H_i} (H_j)=0 $ if $i\neq j$. For each $i=1,2,3$, there is an $\RG$-homomorphism $\varphi_i: R_0 \to R_{H_i}$ which is the identity map at $1$ and the zero map at other subgroups. We can give a
picture of these modules using the following diagrams:
\Small
$$\un{R}=\vcenter{\xymatrix{
R\ar@{-}[dr]&R\ar@{-}[d]&R\ar@{-}[dl]\\
&R& }} \qquad \qquad R_0 =\vcenter{\xymatrix{
0\ar@{-}[dr]&0\ar@{-}[d]&0\ar@{-}[dl]\\
&R& }}  $$
$$\qquad R_{H_1}=\vcenter{\xymatrix{
R\ar@{-}[dr]&0\ar@{-}[d]&0\ar@{-}[dl]\\
&R& }}\qquad R_{H_2}=\vcenter{\xymatrix{
0\ar@{-}[dr]&R\ar@{-}[d]&0\ar@{-}[dl]\\
&R& }}\qquad R_{H_3}=\vcenter{\xymatrix{
0\ar@{-}[dr]&0\ar@{-}[d]&R\ar@{-}[dl]\\
&R& }}$$
\normalsize
where each line denotes the identity map $\id: R\to R$ if it is from $R$ to $R$ and denotes the zero map otherwise.

Now consider the short exact sequence
\begin{equation}\label{eqn:short exact sequence}
0\to R_0 \oplus R_0 \maprt{\gamma}  R_{H_1} \oplus R_{H_2} \oplus
R_{H_3} \maprt{\pi} \un{R}\to 0
\end{equation} where $\pi$ is the
identity map at each $H_i$ and at $1$, it is defined by
$\pi (1)(r,s,t)=r+s+t$ for every $r,s,t\in R$. The map $\gamma$ is the zero map at every $H_i$ and at 1 it is the map defined by 
$$\gamma(1) (u,v)=(-u,\ u+v,\ -v).$$ Note that with respect to the direct sum decomposition above, we can express $\gamma$ with the matrix
$$\gamma=\left[\begin{matrix}-\gamma_1 & 0 \\ \gamma _2 & \gamma _2\\ 0 & -\gamma _3 \end{matrix}\right]$$
where $\gamma_i: R_0 \to R_{H_i}$ are the maps defined above. We will be using the short exact sequence given in \eqref{eqn:short exact sequence} in our computations. We start our computations with an easy
observation:

\begin{lemma}\label{lem:comp for R_0} For every $n\geq 0$, we have $\Ext^n_{\RG}(R_0,\un{R})\cong H^n(G,R).$
\end{lemma}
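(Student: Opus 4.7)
The plan is to construct a projective $\RG$-resolution of $R_0$ directly from a projective $RG$-resolution of the trivial module $R$, and then to observe that applying $\Hom_{\RG}(-,\un R)$ recovers the standard cochain complex that computes $H^*(G,R)$.

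First I would introduce the ``extension by zero at $G/1$'' functor $j_!$ from the category of $RG$-modules to the category of $\RG$-modules: given an $RG$-module $N$, let $j_!(N)$ take the value $N$ at $G/1$ and the value $0$ at every $G/H$ with $1\neq H\in\cF$. The $G$-action on $N$ records the structure at the automorphisms of $G/1$; every morphism of $\G$ with nontrivial source or target is forced to induce the zero map on $j_!(N)$, so $j_!$ is well defined. It is exact, since exactness in the category of $\RG$-modules is tested objectwise. Two facts will be used below: $j_!(R)=R_0$, and $j_!(RG)\cong\uR{G/1}$, since $(G/1)^H=\emptyset$ for every nontrivial $H\in\cF$.

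Next I would establish the adjunction
$$\Hom_{\RG}(j_!(N),M)\cong\Hom_{RG}(N,M(1))$$
for every $RG$-module $N$ and every $\RG$-module $M$. This is almost automatic: a natural transformation $\eta\colon j_!(N)\to M$ vanishes at every nontrivial orbit, and the naturality squares for $\varphi\colon G/1\to G/H$ with $H\neq 1$ commute automatically because the top row $j_!(N)(\varphi)\colon 0\to N$ is zero; what remains is precisely the condition that $\eta_{G/1}\colon N\to M(1)$ be $RG$-linear. The isomorphism is natural in both variables, and it implies in particular that $j_!$ sends free $RG$-modules to free $\RG$-modules, with $j_!(RG)\cong\uR{G/1}$ as noted.

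Finally, I would take any free $RG$-resolution $F_*\to R$ with each $F_i$ a direct sum of copies of $RG$. Applying $j_!$ yields an exact sequence $j_!(F_*)\to R_0$ of free (in particular projective) $\RG$-modules, hence a projective resolution. By the displayed adjunction applied term by term,
$$\Hom_{\RG}(j_!(F_*),\un R)\;\cong\;\Hom_{RG}(F_*,\un R(1))\;=\;\Hom_{RG}(F_*,R)$$
as cochain complexes, and passing to cohomology gives $\Ext^n_{\RG}(R_0,\un R)\cong H^n(G,R)$ for all $n\geq 0$, as desired. The only point requiring real verification is that $j_!$ is well defined and exact and that $j_!(RG)\cong\uR{G/1}$; each reduces to the set-theoretic observation that $(G/1)^H=\emptyset$ for every nontrivial $H\in\cF$, so there is no genuine obstacle.
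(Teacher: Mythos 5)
Your proof is correct and follows essentially the same route as the paper: extend a free $RG$-resolution of $R$ by zero to a projective $R\G$-resolution of $R_0$ (using $(G/1)^H=\emptyset$ for $H\neq 1$ to identify the extended modules with copies of $\uR{G/1}$), then observe that applying $\Hom_{\RG}(-,\un R)$ gives back the standard complex $\Hom_{RG}(F_*,R)$. The only difference is one of presentation: you package the construction as an exact extension-by-zero functor $j_!$ with an explicit adjunction, while the paper simply writes down the specific resolution and reads off the isomorphism of Hom complexes directly.
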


\begin{proof} By definition $\Ext^n_{\RG}(R_0,\un{R})=
H^n(\Hom_{\RG}(P_*,\un{R}))$ where $P_*\to R_0$ is a projective resolution of $R_0$ as an $\RG$-module. Since the definition is independent from the projective resolution that is used, we can pick a specific resolution. Let $F_*$ be a free resolution of $R$ as an $RG$-module. Take $P_*$ as the resolution where $P_*(1)=F_*$ and $P_*(H_i )=0$ for $i=1,2,3$. If $F_k = \oplus _{n_k} RG$, then
$P_k=\oplus _{n_k} \uR{G/1}$, so $P_*$ is a projective resolution of
$R_0$. Since $\Hom _{\RG} (P_* , \un R )\cong \Hom _{RG} (F_*, R)$,
the result follows.
\end{proof}

\begin{lemma}\label{lem:comp for R_H} If $H=H_i$ for some $i\in \{ 1,2,3\}$, then $\Ext^n_{\RG}(R_{H} ,\un{R})
\cong H^n(G/{H} ,R)$ for every $n \geq 0$.
\end{lemma}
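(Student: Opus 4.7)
The plan is to mimic the proof of Lemma \ref{lem:comp for R_0}, using the projective module $\uR{G/H}$ in place of $\uR{G/1}$.

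First, I would compute the values of the projective $\RG$-module $\uR{G/H}$ on the objects of $\Gamma$. Since $G$ is abelian, $H$ is normal in $G$ and acts trivially on $G/H$, so $\uR{G/H}(G/1) = \uR{G/H}(G/H) = R[G/H]$. For $j\ne i$, the order-two subgroup $H_j$ is not contained in $H=H_i$, hence it acts freely on $G/H$, giving $\uR{G/H}(G/H_j) = R[(G/H)^{H_j}] = 0$. By the Yoneda lemma, $\End_{\RG}(\uR{G/H}) \cong \uR{G/H}(G/H) = R[G/H]$ as rings.

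Next, I would pick a free resolution $F_* \to R$ of the trivial module over the group ring $R[G/H]$, with $F_k = \bigoplus_{n_k} R[G/H]$ and boundary maps encoded by matrices with entries in $R[G/H]$. Using the ring isomorphism $\End_{\RG}(\uR{G/H}) \cong R[G/H]$, the same matrices define $\RG$-linear boundaries on $P_k := \bigoplus_{n_k} \uR{G/H}$, and the augmentation $\epsilon: R[G/H] \to R$ gives a natural surjection $\uR{G/H} \to R_H$. I would then check exactness of $P_* \to R_H$ objectwise: at $G/1$ and at $G/H$ the complex is identified with $F_*\to R$, which is exact by construction, while at $G/H_j$ for $j\ne i$ the complex is identically zero. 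Hence $P_* \to R_H$ is a projective resolution in $\RG$-Mod.

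Finally, I would identify the cochain complex $\Hom_{\RG}(P_*,\underline{R})$ with $\Hom_{R[G/H]}(F_*,R)$. Both complexes are $R^{n_k}$ in degree $k$, since $\Hom_{\RG}(\uR{G/H},\underline{R})\cong\underline{R}(G/H)=R$ and $\Hom_{R[G/H]}(R[G/H],R)\cong R$. A Yoneda computation shows that a morphism $\phi:\uR{G/H}\to\underline{R}$ is determined by $\phi_{G/H}[\mathrm{id}]\in R$ and equals $\phi_{G/H}[\mathrm{id}]\cdot\epsilon$ on all of $R[G/H]$; consequently, precomposition with an endomorphism of $\uR{G/H}$ corresponding to $m\in R[G/H]$ multiplies $\phi$ by $\epsilon(m)$, exactly as on the $R[G/H]$-module side. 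Thus the differentials agree, and passing to cohomology yields
\[
\Ext^n_{\RG}(R_H,\underline{R}) \cong H^n\bigl(\Hom_{R[G/H]}(F_*,R)\bigr) = H^n(G/H,R),
\]
as required.

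The hard part is the last bookkeeping step: carefully matching the $\RG$-linear boundaries on $P_*$ with the $R[G/H]$-linear boundaries on $F_*$ and verifying that, under the Yoneda identifications, the induced coboundaries on $\Hom(-,\underline{R})$ act by the augmented entries $\epsilon(m_{ij})$ rather than by the matrix entries $m_{ij}$ themselves. All the other steps are direct extensions of the argument for Lemma \ref{lem:comp for R_0}, and the crucial structural input is simply that $\uR{G/H}$ is concentrated on $\{G/1, G/H\}$ with equal value $R[G/H]$ at both, which is exactly the Klein-four feature used.
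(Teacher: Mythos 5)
Your proposal is correct and follows essentially the same route as the paper: both take a free resolution $F_*$ of $R$ over $R[G/H]$, promote it to a projective resolution $P_*$ of $R_H$ over $R\G$ with $P_n\cong\oplus\uR{G/H}$, and then identify $\Hom_{\RG}(P_*,\underline R)$ with $\Hom_{R[G/H]}(F_*,R)$. The only difference is packaging: the paper specifies $P_*$ by declaring its values at each object (with the restriction map the inflation) and then remarks that $P_n\cong\oplus_{m_n}\uR{G/H}$, whereas you build $P_*$ directly as $\oplus_{n_k}\uR{G/H}$ by transporting the boundary matrices through the Yoneda identification $\End_{\RG}(\uR{G/H})\cong R[G/H]$ and then check exactness objectwise; your step 3 also spells out the compatibility of the differentials (precomposition by $\psi_m$ acting as $\epsilon(m)$) that the paper compresses into the single isomorphism $\Hom_{\RG}(\uR{G/H},\underline R)\cong\Hom_{R[G/H]}(R[G/H],R)$.
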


\begin{proof} Take a free resolution of $R$ as an $R[G/H]$-module
$$F_* :  \ \cdots \to \oplus _{m_2} R[G/H] \to \oplus _{m_1} R[G/H] \to \oplus _{m_0} R[G/H] \to R\to 0.$$ We can consider the same resolution a resolution of $R$ as an $RG$-module via the quotient map $G \to G/H$. The resolution we obtain is the inflation of $F_*$ denoted by $\inf ^G_{G/H} F_*$. Define a projective resolution $P_*$ of $R_H$ as an $R\G$-module by
taking $P_* (H)=F_*$, $P_* (1)=\inf ^G _{G/H} F_*$, and $P_*
(K)=0$ for other subgroups $K \in \cF$. There is only one nonzero restriction map $P_*(H)\to P_* (1)$. Assume that this map is given by the inflation map. For each $n\geq 0$, the $\RG$-module $P_n$ is isomorphic to $\oplus _{m_n} \uR{G/H}$, so $P_*$ is a projective resolution of $R_H$ as an $\RG$-module. Note that $$\Hom _{\RG }
(\uR{G/H}, \un R)\cong \Hom _{R[G/H] } ( R[G/H], R).$$ So, applying
$\Hom _{\RG} (-, \un R)$ to $P_*$, we get
$$\Hom _{\RG } (P_* , \un R)\cong \Hom _{R[G/H]} (F_*, R)$$ as
cochain complexes. So, the result follows.
\end{proof}

\begin{lemma}\label{lem:the map j^*}
For every $i\in \{1,2,3\}$, let $\gamma_i ^* : \Ext ^n _{\RG }
(R_{H_i} , \un R) \to \Ext ^n_{\RG} (R_0, \un R)$ denote the map induced by $\gamma_i: R_0 \to R_{H_i}$ defined above. Then, $\gamma_i ^*$ is the same as the inflation map $\inf ^G _{G/H_i}: H^n (G/H_i, R) \to H^n (G, R)$ in group cohomology under the isomorphisms given in the previous two lemmas.
\end{lemma}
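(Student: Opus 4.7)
The plan is to compute $\gamma_i^*$ by representing $R_0$ and $R_{H_i}$ with the specific projective $\RG$-resolutions chosen in the proofs of Lemmas \ref{lem:comp for R_0} and \ref{lem:comp for R_H}, and to verify that the resulting cochain map coincides with the one defining inflation. Let $\widetilde F_*$ be a free $RG$-resolution of $R$ and let $F_*$ be a free $R[G/H_i]$-resolution of $R$. I take $Q_*$ with $Q_*(1)=\widetilde F_*$ and $Q_*(K)=0$ for $K\ne 1$ as the resolution of $R_0$, and $P_*$ with $P_*(H_i)=F_*$, $P_*(1)=\inf^G_{G/H_i}F_*$, $P_*(H_j)=0$ for $j\ne i$, and with the only nonzero restriction map given by the inflation, as the resolution of $R_{H_i}$.

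Next, I lift $\gamma_i \colon R_0 \to R_{H_i}$ to a chain map $f\colon Q_* \to P_*$ of $\RG$-modules. Since $Q_*$ vanishes at every $K\ne 1$, $f$ is forced to be zero at all $H_j$, and at $G/1$ it reduces to an ordinary $RG$-chain map $f(1)\colon \widetilde F_* \to \inf^G_{G/H_i}F_*$ lifting $\id_R$; such a lift exists by the standard comparison theorem, because $\inf^G_{G/H_i}F_*$ is an exact $RG$-complex augmented to $R$ and $\widetilde F_*$ is a projective $RG$-resolution of $R$. I then verify that these data assemble to a genuine $\RG$-map by checking compatibility with the generating morphisms in $\G$: the condition for projections $G/1 \to G/H_j$ holds vacuously since $Q_*(H_j)=0$ and $P_*(H_j)=0$ for $j \ne i$ (and because $P_*(\varphi)\circ 0 = 0$ in the case $j=i$), while the condition for conjugations on $G/1$ is just the $G$-equivariance of $f(1)$.

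Finally, I apply $\Hom_{\RG}(-, \un R)$ and use the Yoneda-type identifications from the proofs of Lemmas \ref{lem:comp for R_0} and \ref{lem:comp for R_H} to rewrite the $\Hom$ complexes as $\Hom_{R[G/H_i]}(F_*, R)$ and $\Hom_{RG}(\widetilde F_*, R)$. Under these identifications, an $\RG$-map $\varphi\colon P_n \to \un R$ is determined by the $R[G/H_i]$-map $\varphi(H_i)$, and the compatibility with the restriction $P_n(H_i) \to P_n(1)$ forces $\varphi(1)$ to be the $RG$-map $\inf(\varphi(H_i))\colon \inf^G_{G/H_i}F_n \to R$ obtained by viewing $\varphi(H_i)$ as a map of underlying modules. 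Consequently, $f^*(\varphi)=\varphi\circ f$ corresponds at $G/1$ to $\inf(\varphi(H_i))\circ f(1)\colon \widetilde F_n \to R$, which is precisely the cochain-level formula defining the inflation map in group cohomology. Passing to cohomology yields $\gamma_i^* = \inf^G_{G/H_i}$.

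The main obstacle I expect is the bookkeeping in this last step: one must set up the Yoneda identification so compatibly with the restriction structure of $P_*$ that $\varphi(1)$ is literally the inflation of $\varphi(H_i)$, rather than some twisted or augmented version of it. Once that is pinned down, the equality $\gamma_i^*=\inf^G_{G/H_i}$ is an immediate comparison of two chain-level descriptions of the same map.
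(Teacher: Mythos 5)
Your proposal is correct and is essentially the paper's own argument: you choose the same explicit projective $\RG$-resolutions of $R_0$ and $R_{H_i}$ used in Lemmas \ref{lem:comp for R_0} and \ref{lem:comp for R_H}, lift $\gamma_i$ to an $\RG$-chain map whose value at $G/1$ is a comparison map $\widetilde F_* \to \inf^G_{G/H_i}F_*$ over $\id_R$, and identify the induced map on $\Hom$-complexes with the cochain-level description of inflation. The only difference is that you spell out the compatibility checks (that the zero maps at the $H_j$ do assemble into an $\RG$-chain map, and that the Yoneda identification forces $\varphi(1)=\inf(\varphi(H_i))$) that the paper leaves implicit.
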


\begin{proof} Let $P_*$ and $Q_*$ be projective resolutions of $R_0$ and $R_{H_i}$, respectively. We can assume that they are in the form  as in the proofs of the above lemmas.
In particular,
we can assume $P_* (1)$ is a free resolution of $R$ as an
$RG$-module and $Q_* (1)$ is the inflation of a free resolution of $R$ as an
$R[G/H_i]$-module. The identity map on $R$ lifts to a chain map $f'_*: P_* (1) \to Q_* (1)$ since $P_*(1)$ is a projective resolution and $Q_*(1)$ is acyclic. This chain map can be completed (by taking the zero map at other subgroups) to a chain map $f_* : P_* \to Q_*$ of $R\G$-modules. The map $\gamma_i^*$ between the ext-groups is the map induced by this chain map. 
But the map induced by $f'_*$ on cohomology is the inflation map $\inf ^G _{G/H_i}: H^n (G/H_i, R) \to H^n (G, R)$ by the definition of the inflation map in group cohomology.  So, the result follows.
\end{proof}

Now, we are ready to prove the main result of this section.

\begin{proof}[Proof of Theorem \ref{thm:main 2; not periodic}] Consider the following long exact sequence of ext-groups coming from the short exact sequence given in
\eqref{eqn:short exact sequence}:
\begin{eqnarray*}
\cdots \to \Ext_{\RG}^{n-1}(\oplus_i R_{H_i},\un{R})
\maprt{\gamma^*} \Ext_{\RG}^{n-1}(\oplus_2R_0,\un{R})
\maprt{\delta}\Ext_{\RG}^n(\un{R},\un{R})  \\  \maprt{\pi^*}
\Ext_{\RG}^n(\oplus_iR_{H_i},\un{R})\maprt{\gamma^*}
\Ext_{\RG}^n(\oplus_2R_0,\un{R})\to \cdots
\end{eqnarray*}
By Lemma \ref{lem:comp for R_0} and \ref{lem:comp for R_H}, we have $$\Ext ^n _{\RG} (\oplus _i
R_{H_i}, \un R)\cong \oplus _i H^n (G/H_i, R) \quad \text{and} \quad \Ext^n _{\RG} (\oplus _2 R_0, \un R)\cong \oplus _2 H^n(G, R)$$ for all $n\geq 0$. It is well-known that $H^*(C_2, R)\cong R[t]$ for some one-dimensional class $t\in H^1(G, R)$. Let $t_1,t_2,t_3$ be the generators of cohomology rings $H^* (G/H_i, R)$ for $i=1,2,3$, respectively. By Kunneth's theorem $H^*(G, R)\cong R[x,y]$ for some $x,y \in H^1 (G, R)$. Let us choose $x$ and $y$ so that $x=\inf ^G _{G/H_1} (t_1)$ and $y=\inf ^G _{G/H_3} (t_3)$. Then, we have $\inf^G_{G/H_2}(t_2)=x+y$. Note that
$$\gamma^*=\left[\begin{matrix}-\gamma_1 ^* & \gamma _2 ^* & 0 \\ 0 & \gamma _2 ^* & -\gamma ^* _3 \end{matrix}\right]$$
and by Lemma \ref{lem:the map j^*}, we have $\gamma _i ^*=\inf _{G/H_i} ^G$ for all $i=1,2,3$. Therefore, we obtain
$$\gamma ^*(t_1)=(-x, 0), \ \gamma ^*(t_2)=(x+y,x+y),\ \text{and}\ \gamma^*(t_3)=(0, -y).$$ From this it is easy to see that $$\gamma^*: \Ext_{\RG}^n(\oplus_iR_{H_i},\un{R})\to
\Ext_{\RG}^n(\oplus_2R_0,\un{R})$$ is injective for $n\geq 1$, so we get short exact sequences of the form
$$0\to \oplus _i H^{n-1}(G/H_i,R)\maprt{\gamma^*} \oplus _2 H^{n-1} (G, R)\maprt{\delta}\Ext_{\RG}^{n}(\un{R},\un{R})\to 0$$
for every $n \geq 2$. This gives that $$d_n=\dim _R \Ext _{\RG} ^n (\un R, \un R)=2n-3$$ for $n\geq 2$. Looking at the dimensions $n=0,1$ more closely we obtain that $$d_n= (1,0,1,3,5,7,9,\dots).$$  So, $\cF H^n (G, R)=\Ext_{\RG} ^n(\un{R},\un{R})$ is not periodic.
\end{proof}

\section{Limit functor between two families of subgroups}
\label{sect:limitfunctor}

Let $G$ be a finite group and $\cF$ be a family of subgroups closed under conjugation and taking subgroups. Let $\G$ denote the orbit category $\OrG$. An $\RG$-module $M$ is a contravariant functor from $\G$ to category of $R$-modules, so we can talk about the inverse limit of $M$ in the usual sense.
Recall that the inverse limit of $M$ denoted by $$\lim \limits _{\underset{H\in \cF} \longleftarrow} M$$ is defined as the $R$-module of tuples $(m_H) _{H \in \cF} \in \prod _{H \in \cF} M(H)$ satisfying the condition $M(f) m_H=m_K$ for every $G$-map $f: G/K \to G/H.$ To simplify the notation, from now on we will denote the inverse limit of $M$ with $\lim _{\cF} M.$ Our first observation is the following:

\begin{lemma}\label{lem:lim and Hom} Let $M$ be an $\RG$-module. Then, $\lim _{\cF} M\cong \Hom _{\RG} (\underline R, M )$.
\end{lemma}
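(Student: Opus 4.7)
The plan is to unwind both sides using the definitions and check that they match. An element of $\Hom_{R\G}(\underline{R}, M)$ is, by the definition given in the excerpt, a family of $R$-module maps $f_H : \underline{R}(H) \to M(H)$ for $H \in \cF$ satisfying $f_K \circ \underline{R}(\varphi) = M(\varphi) \circ f_H$ for every morphism $\varphi : G/K \to G/H$ in $\G$. Since $\underline{R}(H) = R$ for all $H$ and $\underline{R}(\varphi) = \id_R$, the naturality condition reduces to $f_K = M(\varphi) \circ f_H$.

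Next I would use the fact that an $R$-linear map $f_H : R \to M(H)$ is determined by its value on $1$, giving a bijection $\Hom_R(R, M(H)) \cong M(H)$ via $f_H \mapsto f_H(1) =: m_H$. Under this correspondence, the naturality condition $f_K = M(\varphi) \circ f_H$ translates directly into the compatibility condition $m_K = M(\varphi)(m_H)$ that characterizes elements of the inverse limit $\lim_{\cF} M$, as recalled just before the lemma.

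Therefore I would define the candidate isomorphism
\[
\Phi : \Hom_{R\G}(\underline{R}, M) \longrightarrow \lim_{\cF} M, \qquad (f_H)_{H \in \cF} \longmapsto (f_H(1))_{H \in \cF},
\]
with inverse $\Psi$ sending a compatible tuple $(m_H) \in \lim_{\cF} M$ to the family of maps $f_H : r \mapsto r \cdot m_H$. Both $\Phi$ and $\Psi$ are manifestly $R$-linear, and the discussion above shows that $\Phi$ lands in the inverse limit and $\Psi$ lands in the space of natural transformations. A one-line check gives $\Phi \circ \Psi = \id$ and $\Psi \circ \Phi = \id$.

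There is really no obstacle here: the result is a direct translation between two equivalent repackagings of the same data, once one observes that $\underline{R}$ acts as a representable-like object since $R = \underline{R}(H)$ is freely generated as an $R$-module by $1$ and all structure maps of $\underline{R}$ are identities. The lemma can be viewed as a Yoneda-style statement that the constant functor $\underline{R}$ corepresents the inverse limit functor on $R\G$-modules.
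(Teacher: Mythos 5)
Your proof is correct and is essentially the same as the paper's: the paper simply says the claim ``follows from the definition of the $\Hom$ functor'' and cites Webb, while you have written out the obvious unwinding (a natural transformation $\underline{R}\to M$ is determined by the images $f_H(1)=m_H$, and naturality against the identity structure maps of $\underline{R}$ is exactly the compatibility condition defining $\lim_{\cF}M$).
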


\begin{proof} This follows from the definition of $\Hom$ functor in $\RG$-module category (see \cite[Proposition 5.1]{webb} for more details).
\end{proof}

Now we define a version of inverse limit for two families. Relative limit functors are also considered in \cite{symonds} and some of the results that we prove below are already proved in the appendix of \cite{symonds} but we give more details here. 

\begin{definition}\label{def:relativelimit}
Let $\cV \subseteq \cW$ be two families of subgroups of $G$ which are closed
under conjugation and taking subgroups.  Let $\G_{\cV}=\Or_{\cV}(G)$
and $\G_{\cW}=\Or_{\cW}(G)$. Then define
$${\rm lim}_{\cV}^{\cW}: R\G_{\cV}\to R\G_{\cW} $$  as the functor which takes the value $$({\rm lim}_{\cV}^{\cW}M)(H) =\Hom_{R\G_{\cV}}(R[G/H^?],M)$$ at every $H \in \cW$ and the induced maps by $G$-maps $f: G/K \to G/H$ are given by usual composition of homomorphisms with the linearization of $f$. 
\end{definition}

The description of $(\lim _{\cV } ^{\cW} M ) (H)$ given above comes from the desire to make it right adjoint to the restriction functor $$\Res ^{\cW} _{\cV}: R \G _{\cW} \to R \G _{\cV}$$ which is defined by restricting the values of an $R\G_{\cW}$-module to the smaller family $\cV$. Note that the adjointness gives
$$({\rm \lim}_{\cV}^{\cW}M)(H) \cong \Hom _{R\G_{\cW} } (R[G/H ^? ], {\rm lim}_{\cV} ^{\cW} M)\cong \Hom_{R\G_{\cV}}(\Res ^{\cW}_{\cV} R[G/H^?],M),$$ and that is why $\lim _{\cV} ^{\cW} M$ is defined as above. We also have the following natural description in terms of the usual meaning of inverse limits.

\begin{proposition}\label{pro:limitdescription} Let $\cW$ and $\cV$ be as above and $H \in \cW$. Then,  $(\lim ^{\cW} _{\cV} M )(H)$ is isomorphic to the $R$-module of all tuples $$(m_K)_{K \in \cV|_H}\in \prod _{K \in \cV |_H } M(K)  \ \text{ where } \ \cV|_H=\{ K \in \cV \, |\, K\leq H\}$$ satisfying the compatibility conditions coming from  inclusions and conjugations in $H$. 
\end{proposition}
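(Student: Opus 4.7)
The strategy is a Yoneda-style reduction of $\Hom_{R\G_\cV}(\uR{G/H}, M)$ to compatible families over the $H$-orbit category $\Or_{\cV|_H}(H)$. By Definition \ref{def:relativelimit}, $(\lim^{\cW}_{\cV} M)(H) = \Hom_{R\G_\cV}(\uR{G/H}, M)$, and the candidate isomorphism is $\varphi \mapsto (m_L)_{L \in \cV|_H}$ with $m_L := \varphi_L(eH) \in M(L)$. For each $K \in \cV$ and basis element $gH \in (G/H)^K$ (so $g^{-1}Kg \le H$), the subgroup $L := g^{-1}Kg$ belongs to $\cV|_H$, and the map $\beta_g \colon G/K \to G/L$, $xK \mapsto xgL$, is a well-defined morphism in $\G_\cV$ whose induced map on $\uR{G/H}$ sends $eH \in R[(G/H)^L]$ to $gH \in R[(G/H)^K]$. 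Naturality of $\varphi$ along $\beta_g$ then forces
$$\varphi_K(gH) \,=\, M(\beta_g)(m_L),$$
which both proves injectivity of the candidate map and shows that $\varphi$ is entirely determined by the tuple $(m_L)$.

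To confirm that $(m_L)$ is compatible in the sense of the proposition, I would observe that any morphism $H/L' \to H/L$ in $\Or_{\cV|_H}(H)$, given by an element $h \in H$ with $h^{-1}L'h \le L$, extends tautologically to a morphism $G/L' \to G/L$ in $\G_\cV$; naturality of $\varphi$ along such an extension, evaluated at $eH \in R[(G/H)^L]$, yields exactly the compatibility relation between $m_L$ and $m_{L'}$. Since the morphisms of $\Or_{\cV|_H}(H)$ are generated by the inclusions $L' \hookrightarrow L$ (for $L' \le L$ in $\cV|_H$) and by conjugations by elements of $H$, this recovers precisely the compatibility conditions stated in the proposition.

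For surjectivity, given a compatible tuple $(m_L)_{L \in \cV|_H}$, I would define $\varphi_K$ on each basis element $gH$ by the same formula and extend $R$-linearly. Two verifications remain: (a) well-definedness, i.e., independence of the representative $g$ of the coset $gH$, which holds because replacing $g$ by $gh$ changes $L$ to the $H$-conjugate $h^{-1}Lh$ and the two resulting formulas agree by the conjugation-compatibility of $(m_L)$; and (b) naturality of the family $(\varphi_K)_{K \in \cV}$. The naturality step is the main obstacle: for a morphism $f \colon G/K' \to G/K$ in $\G_\cV$ with $f(K') = gK$, one must factor $\beta_y \circ f$ as $\iota \circ \beta_{gy}$, where $\iota$ is the inclusion $G/L'' \hookrightarrow G/L$ for the subgroups $L = y^{-1}Ky$ and $L'' = (gy)^{-1}K'(gy) \le L$, and then invoke the inclusion-compatibility of $(m_L)$ inside $H$. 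Once this factorization is in place the remaining verification is a routine diagram chase; the real difficulty is only the bookkeeping of the several simultaneous conjugations between $K$, $K'$, $L$, and $L''$.
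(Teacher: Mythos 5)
Your proof is correct and follows essentially the same strategy as the paper's: both define the map by evaluating a morphism $\varphi \in \Hom_{R\G_\cV}(\uR{G/H},M)$ on the trivial coset $eH$ at each $L \in \cV|_H$, verify compatibility of the resulting tuple via the morphisms of the orbit category that fix $eH$, and reconstruct $\varphi$ from a compatible tuple by conjugation on each basis coset $gH$. The one presentational difference is that you derive the reconstruction formula $\varphi_K(gH)=M(\beta_g)(m_{g^{-1}Kg})$ as a consequence of naturality along $\beta_g$ rather than positing it outright, and you flag explicitly the well-definedness check (independence of the coset representative), which the paper leaves tacit; these are improvements in exposition, not a different route.
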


\begin{proof} Let us denote the $R$-module of tuples $(m_K)_{K \in \cV |_H}$ by $\lim _{\cV |_H} M$. We will prove the proposition by constructing an explicit isomorphism
$$\varphi : \Hom _{R\G _{\cV}} (R[G/H ^? ], M) \to {\rm lim} _{\cV |_H } M.$$ The $R\G_{\cV}$-module $R[G/H ^?]$ takes the value $R[(G/H)^K ]=R\{gH \ | \ K^g \leq H\}$ 
at every subgroup $K \in \cV$ with $K^g \leq H$ and takes the value zero at all other subgroups. Given a homomorphism $f=(f_K)_{K \in \cV}$ in $\Hom _{R\G _{\cV}}(R[G/H ^?], M)$, we define $\varphi (f)$ as the tuple $( f_K (H))_{K\in \cV|_H}$ where $H$ denotes the trivial coset. Note that we have $H\in (G/H)^K$ since $K\leq H$. If $L\leq K \leq H$, then it is clear that $\Res ^K _L m_K=m_L$ since $$\Res ^K _L : R[(G/H)^K ]\to R[(G/H)^L]$$ is defined by inclusion so it takes $H$ to $H$. Similarly, for every $h \in H$, we have $c^h (m_K)=m_{\leftexp{h}K}$ since $c^x: R[(G/H)^K ]\to R[(G/H)^{\leftexp{x}K}]$, which is defined by $H\to xH$, is the identity map when $x\in H$. Here $\leftexp{x}K$ denotes the conjugate subgroup $xKx^{-1}$. Therefore, the tuple $(f_K (H))_{K\in \cV|_H}$ satisfies the compatibility conditions, so $\varphi(f)$ is in $\lim _{\cV|_H}M$. 

To show that $\varphi$ is an isomorphism, we will prove that for every tuple $(m_K)_{K\in \cV |_H}$ in $\lim _{\cV |_ H } M$ there is a unique family of homomorphisms $f_L : R[(G/H )^L] \to M(L)$ which satisfy $f_L (H)=m_L$ for all $L \leq H$, and which are also compatible in the usual sense of the compatibility of homomorphisms in $\Hom _{R\G _{\cV}} (R[G/H ^?], M)$. Let $L\in \cV$ be such that $(G/H)^L\neq \emptyset$. We define the $R$-homomorphism $f_L : R[(G/H)^L] \to M(L)$ in the following way: Let $gH$ be a coset in $(G/H)^L$. Then we have $L^g \leq H$. Let $K=L^g$. Since $K \leq H$, we have a given element $m_K\in M(K)$. Set $f_L (gH)=c^g (m_K)$. Since we can do this for all $gH\in (G/H)^L$, this defines $f_L$ completely for all $L$ with $(G/H)^L \neq \emptyset$. We take $f_L=0$ for other subgroups. 

Now note that under these definitions, we have a commuting diagram 
$$\xymatrix{R[(G/H)^K]\ar[d]^-{c^g}
\ar[r]^-{f_K}& M(K) \ar[d]^-{c^g}\\
R[(G/H)^L] \ar[r]^-{f_L} &M(L) }$$
since the map on the left takes $H$ to $gH$. It is also clear that the maps $f_L$ are compatible under restrictions since
the restriction maps on $R[G/H^?]$ are given by inclusions. So, the family $f=(f_L)_{L \in \cV}$ defines a homomorphism of $R\G_{\cV}$-modules. Since the values of $f$ at each $K$ are defined in a unique way using the tuple $(m_K)_{K \in \cV |_H}$, this shows that the homomorphism $\varphi$ is an isomorphism.
\end{proof}

We now prove the adjointness property mentioned above.

\begin{proposition}[Proposition 12.2, \cite{symonds}]\label{pro:adjointness}
Let $M$ be an $R\G _{\cW}$-module and $N$ be an $R\G _{\cV}$-module. Then, we have
$$\Hom _{R\G_{\cW}} (M,\, {\rm lim}_{\cV} ^{\cW} N)\cong \Hom _{R\G _{\cV} } (\Res ^{\cW} _{\cV} M,\, N).$$
\end{proposition}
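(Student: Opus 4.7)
The plan is to establish the adjunction by first checking it on free modules via the Yoneda lemma, and then extending to arbitrary $M$ by taking a presentation. The conceptual point is that ${\rm lim}_{\cV}^{\cW}$ is designed to be the right Kan extension of $N$ along the inclusion $R\G_{\cV}\hookrightarrow R\G_{\cW}$; Definition \ref{def:relativelimit} was chosen precisely so that adjointness holds.

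First I would treat the representable case $M = R[G/H^?]$ for $H\in\cW$. The Yoneda lemma for $R\G_{\cW}$-modules, namely $\Hom_{R\G_{\cW}}(R[G/H^?], L)\cong L(H)$ as already noted in the proof that representables are projective, gives
$$\Hom_{R\G_{\cW}}\bigl(R[G/H^?],\, {\rm lim}_{\cV}^{\cW} N\bigr) \cong \bigl({\rm lim}_{\cV}^{\cW} N\bigr)(H) = \Hom_{R\G_{\cV}}\bigl(R[G/H^?],\, N\bigr),$$
the final equality being Definition \ref{def:relativelimit}. On the right hand side of the claimed adjunction, $\Res^{\cW}_{\cV} R[G/H^?]$ is by construction the $R\G_{\cV}$-module $K\mapsto R[(G/H)^K]$ for $K\in\cV$, which coincides with the $R\G_{\cV}$-representable $R[G/H^?]$. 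So the two sides agree canonically on representables.

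Next I would pass to arbitrary $M$ by choosing a presentation $F_1 \to F_0 \to M \to 0$ with $F_0$ and $F_1$ direct sums of representables $R[G/H_i^?]$, $H_i\in\cW$. Since $\Res^{\cW}_{\cV}$ is exact (it is defined objectwise) and the two Hom-functors are contravariant left exact in $M$, applying them to the presentation produces two left exact sequences. Both Hom-functors convert direct sums in the first variable into products, so the representable isomorphism extends to the $F_i$; the five lemma then produces the desired isomorphism on $M$, and naturality in $M$ and $N$ is automatic from the naturality of the Yoneda map.

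Alternatively, the bijection can be written down explicitly. Given $\varphi \in \Hom_{R\G_{\cW}}(M,\, {\rm lim}_{\cV}^{\cW} N)$ and $K\in\cV$, define $\widetilde\varphi_K(m) = \varphi_K(m)_K(K) \in N(K)$, i.e.\ evaluate the $R\G_{\cV}$-map $\varphi_K(m): R[G/K^?]\to N$ at the trivial coset; these assemble into a morphism $\Res^{\cW}_{\cV} M \to N$ by the $R\G_{\cW}$-naturality of $\varphi$. In the reverse direction, given $\psi$ and $m\in M(H)$, send $gH \in (G/H)^K$ to $\psi_K(M(\hat g)(m))$, where $\hat g: G/K \to G/H$ is the $G$-map $K\mapsto gH$. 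The main obstacle is pure bookkeeping: one has to verify that these formulas respect all restriction and conjugation maps and that the two constructions are mutually inverse, which is a matter of unwinding the Yoneda correspondence in tandem with the description of ${\rm lim}_{\cV}^{\cW}$ given in Proposition \ref{pro:limitdescription}.
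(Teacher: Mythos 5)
Your proof is correct, and your first (Yoneda/presentation) route is genuinely different from the paper's. The paper defines the natural map $\varphi$ by restriction followed by the identification $(\lim_{\cV}^{\cW}N)(K)\cong N(K)$, and then constructs an explicit inverse using the tuple description of $(\lim_{\cV}^{\cW}N)(H)$ from Proposition~\ref{pro:limitdescription}, writing $f'_H(m)=(f_K(\Res^H_K m))_{K\in\cV|_H}$; this is concrete and self-contained, but requires verifying by hand that the assembled tuple is a morphism of $R\G_{\cW}$-modules. Your second, ``explicit'' variant is essentially the paper's argument restated through the $\Hom_{R\G_{\cV}}(R[G/H^?],N)$ description of the limit rather than the tuple description, and your formula $gH\mapsto\psi_K(M(\hat g)(m))$ agrees with the paper's on the trivial coset. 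Your first argument, by contrast, checks the adjunction on representables $R[G/H^?]$ via Yoneda (which is immediate from the defining formula for $\lim_{\cV}^{\cW}$ together with the observation that $\Res^{\cW}_{\cV}R[G/H^?]$ equals the $R\G_{\cV}$-module $K\mapsto R[(G/H)^K]$), and then propagates to all $M$ by choosing a free presentation $F_1\to F_0\to M\to 0$ and using left exactness of both $\Hom$-functors together with the exactness and direct-sum compatibility of $\Res$. This buys you a cleaner, more conceptual proof that makes transparent the slogan that $\lim_{\cV}^{\cW}$ is exactly the right Kan extension along the inclusion of orbit categories, and it never requires checking compatibility conditions on the nose; the small price is that one must note, as you do, that the representable identification is natural in the first variable so the five-lemma diagram actually commutes, which follows from the naturality of Yoneda. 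Either route is complete.
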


\begin{proof} Note that for $K \in \cV$, we have 
$$({\rm lim}_{\cV} ^{\cW} N )(K) = \Hom_{R\G_{\cV}}(R[G/K^?], N)\cong N(K),$$
so we can easily define an $R$-homomorphism 
$$\varphi: \Hom _{R\G_{\cW}} (M,\, {\rm lim}_{\cV} ^{\cW} N)\to \Hom _{R\G _{\cV} } (\Res ^{\cW} _{\cV} M,\, N)$$
as the homomorphism which takes an $R\G _{\cW}$-module homomorphism $\alpha: M\to {\rm lim}_{\cV} ^{\cW} N$ to an $R\G _{\cV}$-homomorphism by restricting its values to the subgroups in $\cV$. For the homomorphism in the other direction, note that for every $H \in \cW$,
$$({\rm lim}_{\cV} ^{\cW} N) (H)\cong {\rm lim} _{\cV|_H} N $$
by Propositions \ref{pro:limitdescription}, so an element of $(\lim _{\cV} ^{\cW} N) (H)$ can be thought of as a tuple $(n_K)_{K \in \cV |_H}$ with $n_K \in N(K)$. So, given a homomorphism $f\in \Hom _{R\G_{\cV} } (\Res ^{\cW} _{\cV} M, N)$, we can define a unique homomorphism $f'$ in $\Hom _{R\G _{\cW} } (M, \lim _{\cV } ^{\cW} N)$ by defining $$f'_H (m) = (f_K (\Res^H _K m ))_{K \in \cV |_H }$$ for every $m \in M(H)$ and for every $H\in \cW$. It is clear that $f'$ is uniquely defined by $f$ and that $\varphi (f')=f$. So, $\varphi$ is an isomorphism.
\end{proof}

As a consequence of this adjointness we can conclude the following:

\begin{corollary}\label{cor:injtoinj} The limit functor $\lim _{\cV}^{\cW}$ takes injective modules to injective modules.
\end{corollary}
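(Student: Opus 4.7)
The plan is to invoke the standard homological principle that a right adjoint to an exact functor preserves injectives, and verify that both hypotheses apply in our setting.

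First, I would observe that the restriction functor $\Res^{\cW}_{\cV}: R\G_{\cW} \to R\G_{\cV}$ is exact. This is immediate from the fact that exactness of a sequence of modules over either orbit category is defined objectwise, together with the fact that restriction simply forgets the values of a functor at objects $G/H$ with $H \in \cW \setminus \cV$; no values at subgroups in $\cV$ are altered. So short exact sequences in $R\G_{\cW}$ are carried to short exact sequences in $R\G_{\cV}$.

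Next, I would combine this with Proposition \ref{pro:adjointness}, which gives the adjunction $\Res^{\cW}_{\cV} \dashv {\rm lim}_{\cV}^{\cW}$. Suppose $I$ is an injective $R\G_{\cV}$-module; I want to show that $\Hom_{R\G_{\cW}}(-, {\rm lim}_{\cV}^{\cW} I)$ is exact. Given a monomorphism $A \hookrightarrow B$ in $R\G_{\cW}$, apply the adjunction to identify
$$\Hom_{R\G_{\cW}}(B, {\rm lim}_{\cV}^{\cW} I) \to \Hom_{R\G_{\cW}}(A, {\rm lim}_{\cV}^{\cW} I)$$
with
$$\Hom_{R\G_{\cV}}(\Res^{\cW}_{\cV} B, I) \to \Hom_{R\G_{\cV}}(\Res^{\cW}_{\cV} A, I).$$
Since $\Res^{\cW}_{\cV}$ is exact, $\Res^{\cW}_{\cV} A \hookrightarrow \Res^{\cW}_{\cV} B$ is still a monomorphism, and injectivity of $I$ in $R\G_{\cV}$ makes the second map surjective. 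Pulling back through the natural isomorphism of the adjunction shows the first map is surjective as well, so ${\rm lim}_{\cV}^{\cW} I$ is injective.

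There is no real obstacle here; the only thing to be slightly careful about is that the isomorphism in Proposition \ref{pro:adjointness} is natural in both variables, so that the identification of the two $\Hom$-maps above is genuinely the map induced by $A \hookrightarrow B$. This naturality is built into the construction of $\varphi$ given in the proof of Proposition \ref{pro:adjointness} (it is defined by restricting morphisms objectwise and by an explicit universal formula in the reverse direction), so it presents no difficulty.
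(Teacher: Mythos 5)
Your argument is exactly the paper's: exhibit $\Res^{\cW}_{\cV}$ as an exact left adjoint of $\lim_{\cV}^{\cW}$ (via Proposition \ref{pro:adjointness}) and apply the standard fact that a right adjoint to an exact functor preserves injectives. The paper states this in one line; you have merely unpacked the routine diagram chase and the naturality check, which is fine.
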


\begin{proof} This follows from the adjointness property
given in Proposition \ref{pro:adjointness} and the fact that $\Res ^{\cW} _{\cV}$  takes exact sequences of $R\G _{\cW}$-modules to exact sequences of $R\G _{\cV}$-modules.
\end{proof}

Now we discuss some special cases of the limit functor $\lim _{\cV} ^{\cW}$. 

\begin{example}\label{ex:invariantfunctor} Let $\cV=\{ 1\}$ be the family formed by a single subgroup which is the trivial subgroup and $\cW=\cF$ be an arbitrary family of subgroups of $G$ closed under conjugation and taking subgroups. Modules over $R\G _{\{1\}}$ are the same as $RG$-modules. Let  $M$ be an
$RG$-module.  Then,
\begin{eqnarray*}
(\lim{}^{\cF}_{\{ 1\}}M)(H) \cong  \Hom_{R\G_{\{ 1\}}}(R[G/H^?],M)\cong \Hom_{RG}(R[G/H],M) \cong M^H.
\end{eqnarray*}
It is easy to check that these isomorphisms commute with restrictions and conjugations, so $\lim{}^{\cF}_{\{ 1\}}M \cong M^?$ as $R\G$-modules where $\G=\OrG$. Hence
$${\rm lim}^{\cF}_{\{ 1\}}: RG\text{-Mod} \to R\G\text{-Mod}$$ 
is the same as the invariant functor mapping $M\mapsto M^?$.
\end{example}

Another special case is the following:

\begin{example}\label{ex:toall} Let $\cV=\cF$ be an arbitrary family of subgroups of $G$ closed under conjugation and taking subgroups, and let $\cW=\{all\}$ be the family of all subgroups of $G$. Then for every $R\G_{\cV}$-module $M$ we have
\begin{eqnarray*}
({\rm lim}_{\cF}^{\{all\}}M)(G)\cong \Hom_{R\G}( R[G/G^?],M)\cong\Hom_{R\G}(\un{R},M)\cong{\rm lim}_{\cF}M.
\end{eqnarray*}
So, we can write the usual limit functor as the composition 
$${\rm lim}_{\cF}M=ev_G\circ {\rm lim}_{\cF}^{\{all\}}$$ where
$ev_G:R\G_{\{all\}}\to R$-Mod is the functor defined by $ev_G(M)=M(G)$. 
\end{example}

We have the following easy observation for the composition of limit functors.

\begin{lemma}\label{lem:composition}
Let $G$ be a finite group and $\cU \subseteq \cV \subseteq \cW$ be three families of subgroups of $G$ which are closed under conjugation and taking subgroups. Then we have $${\rm lim} _{\cU} ^{\cW} ={\rm lim}_{\cV} ^{\cW} \circ {\rm lim} _{\cU }^{\cV}.$$ In particular, for any family $\cF$ the composition $\lim_{\cF}\circ\lim^{\cF}_{\{ 1\}}$ is the same as the functor $RG\text{-Mod} \to R\text{-Mod}$ which takes $M\to M^G$.
\end{lemma}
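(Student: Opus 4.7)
The plan is to establish the first identity by invoking the uniqueness of adjoint functors, and then to deduce the ``in particular'' statement by combining it with Examples \ref{ex:invariantfunctor} and \ref{ex:toall}.

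For the first assertion, Proposition \ref{pro:adjointness} says that ${\rm lim}_{\cU}^{\cW}$ is right adjoint to $\Res^{\cW}_{\cU}: R\G_{\cW}\to R\G_{\cU}$, and similarly that ${\rm lim}_{\cU}^{\cV}$ is right adjoint to $\Res^{\cV}_{\cU}$ and ${\rm lim}_{\cV}^{\cW}$ is right adjoint to $\Res^{\cW}_{\cV}$. The restriction functors clearly satisfy $\Res^{\cW}_{\cU}=\Res^{\cV}_{\cU}\circ \Res^{\cW}_{\cV}$, since restricting the values of a functor from $\cW$ down to $\cU$ factors through restriction to $\cV$. Hence the composition ${\rm lim}_{\cV}^{\cW}\circ {\rm lim}_{\cU}^{\cV}$ is right adjoint to $\Res^{\cV}_{\cU}\circ \Res^{\cW}_{\cV}=\Res^{\cW}_{\cU}$, and by the uniqueness of adjoint functors this composition is naturally isomorphic to ${\rm lim}_{\cU}^{\cW}$, which is the first claim.

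For the ``in particular'' statement, I apply the identity just established to the nested chain $\{1\}\subseteq \cF\subseteq \{all\}$, where $\{all\}$ denotes the family of all subgroups of $G$. This yields ${\rm lim}_{\{1\}}^{\{all\}}={\rm lim}_{\cF}^{\{all\}}\circ {\rm lim}_{\{1\}}^{\cF}$. Composing both sides with the evaluation functor $ev_G$ and using Example \ref{ex:toall}, which identifies ${\rm lim}_{\cF}=ev_G\circ {\rm lim}_{\cF}^{\{all\}}$, I obtain ${\rm lim}_{\cF}\circ {\rm lim}_{\{1\}}^{\cF}=ev_G\circ {\rm lim}_{\{1\}}^{\{all\}}$. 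Finally, Example \ref{ex:invariantfunctor} (applied with $\cF$ replaced by $\{all\}$) identifies ${\rm lim}_{\{1\}}^{\{all\}}M$ with the invariants functor $M\mapsto M^?$ over the orbit category on all subgroups, whose value at $G$ is precisely $M^G$.

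I do not anticipate a serious obstacle here. The only subtle point is to verify that the adjunction isomorphisms in Proposition \ref{pro:adjointness} are natural in both variables, so that uniqueness of adjoints applies; this is immediate from the construction given there. If one prefers a more concrete verification, one can instead match both sides of the first identity using the explicit description in Proposition \ref{pro:limitdescription}: for $H\in \cW$ both $({\rm lim}_{\cU}^{\cW}M)(H)$ and $({\rm lim}_{\cV}^{\cW}\circ {\rm lim}_{\cU}^{\cV}M)(H)$ can be identified with the module of compatible tuples $(m_L)_{L\in \cU|_H}$, and the restriction and conjugation structures match objectwise.
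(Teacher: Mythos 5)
Your proposal is correct and follows essentially the same route as the paper: the paper's proof also reduces the first identity to the factorization $\Res^{\cW}_{\cU}=\Res^{\cV}_{\cU}\circ\Res^{\cW}_{\cV}$ together with the adjointness of limit and restriction from Proposition \ref{pro:adjointness}, and also obtains the ``in particular'' statement from the chain $\{1\}\subseteq\cF\subseteq\{\mathrm{all}\}$ and the identification $\lim_{\{1\}}M\cong M^G$. You just spell out the second step in more detail (via $ev_G$ and the two examples) than the paper's one-line remark, and your suggested alternative check via Proposition \ref{pro:limitdescription} is a redundant but valid sanity check.
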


\begin{proof} The first statement follows from the fact that $\Res ^{\cW}_{\cU}=\Res ^{\cV} _{\cU} \Res ^{\cW} _{\cV}$ and the adjointness of limit and restriction functors. The second statement is clear since $\lim _{\{ 1\}} M \cong M^G$ for every $RG$-module $M$. 
\end{proof}

Recall that the cohomology of group $H^n(G,R)$ is defined as the $n$-th derived functor of the $G$-invariant functor $M\to M^G$. So, it makes sense to look at the derived functors of the limit functor as a generalization of group cohomology.

\section{Higher Limits and relative group cohomology}
\label{sect:higherlimits}
Let $G$ be a finite group and $\cF$ be a family of subgroups of $G$ closed under conjugation and taking subgroups. Let $R$ be a commutative ring and $\G$ denote the orbit category $\OrG$. For an $R\G$-module $M$, the usual inverse limit $\lim _{\cF}M$ is isomorphic to $\Hom _{R\G} (\underline R, M)$. Since the $\Hom$ functor is a left exact functor, the limit functor $M \to \lim _{\cF} M$ is also left exact. So we can define its right derived functors in the usual way by taking an injective resolution of $M \to I^*$ in the $\RG$-module category and then defining the $n$-th derived functor of the inverse limit functor as $${\rm lim}_{\cF} ^n M:= H^n ({\rm lim}_{\cF} I^* ).$$ This cohomology group is called the $n$-th {\it higher limit} of $M$. As a consequence of the isomorphism in Lemma \ref{lem:lim and Hom}, we have  $\lim ^n _{\cF} M\cong \Ext ^n _{\RG} (\underline R, M)$ so higher limits can be calculated also by using a projective resolution of $\underline R$ (see Proposition \ref{pro:Ext groups via proj res}). Higher limits have been studied extensively since they play an important role in the calculation of homotopy groups of homotopy colimits. For more details on this we refer the reader to \cite{grodal} and \cite{symonds}.

The situation with $\lim _{\cF}$ can be extended easily to the limit functor with two families. Let $\cV \subseteq \cW$ be two families of subgroups of $G$ which are closed under conjugation and taking subgroups. Note that for each $H \in \cW$, we have  $(\lim _{\cV} ^{\cW} M )(H) =\Hom _{R\G _{\cV} } (R[G/H] ^?, M),$ so $\lim _{\cV} ^{\cW}$ is left exact at each $H$, hence it is left exact as a functor $R\G _{\cV}\text{-Mod}\to R\G _{\cW}\text{-Mod}.$ This leads to the following definition.

\begin{definition} For each $n \geq 0$, the $n$-th higher limit $(\lim_{\cV}^{\cW})^n$ is defined as the $n$-th
derived functor of the limit functor ${\rm lim}_{\cV}^{\cW}$. So, for every $R \G_{\cV}$-module $M$ and for every $n\geq 0$, we have
$$({\rm lim}_{\cV}^{\cW})^n(M):=H^n({\rm \lim}_{\cV}^{\cW}I^*)$$ where $I^*$ is an injective resolution of $M$ as an $R\G_{\cV}$-module.
\end{definition}

The special cases of the limit functor that were considered above in Examples \ref{ex:invariantfunctor} and \ref{ex:toall} have higher limits which correspond to some known cohomology groups.

\begin{proposition}\label{pro:groupcohomology}
Let $G$ be a finite group, $\cF$ be a family of subgroups of $G$ closed under conjugation and taking subgroups, and let $M$ be an $RG$-module. Then, for every $n \geq 0$, the functor $(\lim _{\{1\}}^{\cF} )^n (M)$ is isomorphic to the group cohomology functor $$H^n (?, M) : RG\text{-Mod} \to R\G\text{-Mod} $$ which has the value $H^n (H, M)$ for every subgroup $H \in \cF$.
\end{proposition}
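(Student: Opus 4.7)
The plan is to compute $(\lim_{\{1\}}^{\cF})^n M$ directly from the definition using a single injective resolution of $M$ as an $RG$-module, and then to check that the values at different subgroups assemble into the group-cohomology functor $H^n(?,M)$.

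First I would take an injective resolution $M \hookrightarrow I^*$ in $RG\text{-Mod}$. By definition, $(\lim_{\{1\}}^{\cF})^n M = H^n(\lim_{\{1\}}^{\cF} I^*)$ in $R\G\text{-Mod}$, where the cohomology is computed objectwise. Evaluating at $H\in\cF$, Example \ref{ex:invariantfunctor} identifies $(\lim_{\{1\}}^{\cF} N)(H)$ with $N^H = \Hom_{RG}(R[G/H], N) \cong \Hom_{RH}(R, \Res^G_H N)$ naturally in the $RG$-module $N$; so
$$\bigl((\lim{}_{\{1\}}^{\cF})^n M\bigr)(H) \cong H^n\bigl((\Res^G_H I^*)^H\bigr).$$

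Second, I would justify that the right-hand side is $H^n(H,M)$. Because $G$ is finite, the restriction functor $\Res^G_H$ is right adjoint to $\Ind_H^G = RG\otimes_{RH}-$ (which is exact), so $\Res^G_H$ preserves injectives. Hence $\Res^G_H I^*$ is an injective resolution of $M$ as an $RH$-module, and the cohomology of its $H$-fixed points is exactly $H^n(H,M)$ by the usual definition of group cohomology.

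Third, I would verify naturality of this identification with respect to morphisms in $\G$, so that the collection $H \mapsto H^n(H,M)$ indeed reconstitutes the $R\G$-module $H^n(?,M)$. For each $G$-map $f\colon G/K \to G/H$, the induced map $(\lim_{\{1\}}^{\cF} I^j)(f)\colon (I^j)^H \to (I^j)^K$ is, by the isomorphisms above, built from the inclusion $RH \hookrightarrow RG$ composed with conjugation by a suitable coset representative; passing to cohomology yields precisely the classical restriction and conjugation maps in group cohomology. Thus the isomorphism is natural in $H\in\cF$, and $(\lim_{\{1\}}^{\cF})^n(M) \cong H^n(?,M)$ as $R\G$-modules.

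The main obstacle, as usual in this type of result, is not the objectwise computation but the naturality check in the last step: one must trace carefully through the adjunction $\Hom_{RG}(R[G/H],-) \cong (-)^H \cong \Hom_{RH}(R, \Res^G_H(-))$ to see that a general $G$-map $G/K\to G/H$ (which is the composition of a projection and a conjugation) induces on fixed points exactly the map used in defining the classical restriction and conjugation operations on $H^*(H,M)$, and then to confirm that this identification survives taking an injective resolution and passing to derived functors.
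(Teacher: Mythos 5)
Your proof is correct and follows essentially the same route as the paper: compute the higher limit objectwise via an $RG$-injective resolution $I^*$ of $M$, use $(\lim_{\{1\}}^{\cF} N)(H) \cong N^H$, and identify $H^n((I^*)^H)$ with $H^n(H,M)$. You are more careful than the paper on one point worth noting: the identification $H^n((\Res^G_H I^*)^H) = H^n(H,M)$ requires $\Res^G_H I^*$ to be an $RH$-injective resolution, which you justify by observing that $\Res^G_H$ is right adjoint to the exact functor $\Ind_H^G$ — the paper leaves this implicit.
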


\begin{proof} By the definition given above, we have $$({\rm lim} _{\{1\}}^{\cF} )^n (M)=H^n ({\rm lim} _{\{1\}} ^{\cF} I^* )=H^n ( (I^*)^?).$$ So, for each $H \in \cF$, the $n$-th higher limit has the value $H^n ((I^* )^H)=H^n (H, M)$. The fact that these two functors are isomorphic as $R\G$-modules follows from the definition of restriction and conjugation maps in group cohomology.
\end{proof}

We also have the following:

\begin{proposition}\label{pro:relativecohomology} Let $G$ and $\cF$ be as above and let $M$ be an $RG$-module. Then, for every $n\geq 0$, the higher limit $\lim _{\cF} ^n (M^?)$ is isomorphic to the relative cohomology group $\cF H^n (G, M)$.
\end{proposition}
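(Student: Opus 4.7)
The plan is to obtain this isomorphism by chaining together two results that are already available in the paper. Concretely, I would first identify $\lim_{\cF}^n(M^?)$ with an $\Ext$ group over $R\G$, and then apply Theorem \ref{thm:main 1; computational tool} to identify that $\Ext$ group with $\cF H^n(G,M)$.

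For the first step, recall from Lemma \ref{lem:lim and Hom} that for any $R\G$-module $N$ we have a natural isomorphism
$$\lim{}_{\cF} N \cong \Hom_{R\G}(\underline{R}, N).$$
Both sides are left exact functors of $N$ on the abelian category of $R\G$-modules, which has enough injectives, and the isomorphism is natural in $N$. Hence passing to right derived functors yields
$$\lim{}_{\cF}^n N \cong \Ext^n_{R\G}(\underline{R}, N)$$
for every $n \geq 0$ and every $R\G$-module $N$. This is precisely the identification mentioned at the beginning of Section \ref{sect:higherlimits}. Specializing to $N = M^?$ gives $\lim_{\cF}^n(M^?) \cong \Ext^n_{R\G}(\underline{R}, M^?)$.

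For the second step, Theorem \ref{thm:main 1; computational tool} states that $\cF H^n(G,M) \cong \Ext^n_{R\G}(\underline{R}, M^?)$. Combining the two identifications gives the desired isomorphism
$$\lim{}_{\cF}^n(M^?) \cong \Ext^n_{R\G}(\underline{R}, M^?) \cong \cF H^n(G,M).$$

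There is essentially no obstacle here, since the work has already been done in proving Lemma \ref{lem:lim and Hom} and Theorem \ref{thm:main 1; computational tool}. The only thing worth a brief remark in the written proof is the standard fact that a natural isomorphism of left exact functors between abelian categories with enough injectives induces natural isomorphisms on all higher right derived functors, which is what legitimizes passing from $\lim_{\cF} \cong \Hom_{R\G}(\underline{R}, -)$ to $\lim_{\cF}^n \cong \Ext^n_{R\G}(\underline{R}, -)$.
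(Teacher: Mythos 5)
Your proposal matches the paper's own proof exactly: the paper also observes (at the start of Section \ref{sect:higherlimits}, via Lemma \ref{lem:lim and Hom}) that $\lim_{\cF}^n(M^?) \cong \Ext^n_{R\G}(\underline{R}, M^?)$ and then cites Theorem \ref{thm:main 1; computational tool}. Your write-up simply makes the derived-functor step a bit more explicit, which is fine.
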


\begin{proof}
We already observed that $\lim _{\cF } ^n (M^?)\cong \Ext^n _{\RG } (\underline R, M^?) $. So, the result follows from Theorem \ref{thm:main 1; computational tool}.
\end{proof}

Now, we will construct a spectral sequence that converges to the cohomology of a given group $G$ and which has the horizontal line isomorphic to the relative group cohomology of $G$. For this we first recall the following general construction of a spectral sequence, called the Grothendieck spectral sequence.

\begin{theorem}[Theorem 12.10, \cite{mccleary}] Let $\cC_1$, $\cC_2$, $\cC_3$ be abelian categories and $F:\cC_1\to\cC_2$ and $G:\cC_2\to\cC_3$ be covariant functors.  Suppose $G$ is left exact and $F$ takes injective objects in $\cC_1$ to $G$-acyclic objects in $\cC_2$.  Then there is a spectral sequence with $$E^{p,q}_2\cong (R^pG)(R^qF(A))$$ and converging to $R^{p+q}(G\circ F)(A)$ for $A\in\cC_1$.
\end{theorem}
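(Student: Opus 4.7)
The plan is to construct a double complex whose total cohomology computes $R^n(G\circ F)(A)$ and whose two natural filtrations yield two spectral sequences. The hypothesis that $F$ sends injectives to $G$-acyclics will force one of these spectral sequences to collapse, pinning down the target, while the other will have the desired $E_2$ page.

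First I would choose an injective resolution $A\to I^\bullet$ in $\cC_1$; then $F(I^\bullet)$ is a cochain complex in $\cC_2$ whose $q$-th cohomology is, by definition, $R^qF(A)$. To resolve the entire complex $F(I^\bullet)$ simultaneously by injectives of $\cC_2$, I would build a Cartan--Eilenberg resolution $J^{\bullet,\bullet}$: a first-quadrant double complex of injectives, equipped with an augmentation $F(I^\bullet)\to J^{\bullet,0}$, satisfying (i) for each $q$, the row $J^{\bullet,q}$ is an injective resolution of $F(I^q)$, and (ii) the vertical cocycles, coboundaries, and cohomology objects of $J^{\bullet,\bullet}$ are themselves injective, and the resulting horizontal complexes form injective resolutions of the corresponding cocycles, coboundaries, and cohomology objects of $F(I^\bullet)$. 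The existence of such a resolution is a standard inductive construction in any abelian category with enough injectives.

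Next I would apply $G$ to obtain the double complex $G(J^{\bullet,\bullet})$ in $\cC_3$ and take its total complex. Filtering first by rows, for each fixed $q$ the row $J^{\bullet,q}$ is an injective resolution of $F(I^q)$, so the row cohomology of $G(J^{\bullet,q})$ computes $R^pG(F(I^q))$. Since $I^q$ is injective in $\cC_1$, the standing hypothesis gives that $F(I^q)$ is $G$-acyclic, whence $R^pG(F(I^q))=0$ for $p>0$. This spectral sequence therefore collapses onto the column $p=0$, where it reads $H^q(GF(I^\bullet))=R^q(G\circ F)(A)$. Hence the total cohomology of $G(J^{\bullet,\bullet})$ is $R^n(G\circ F)(A)$, identifying the abutment of the other spectral sequence as well.

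For the spectral sequence obtained by filtering by columns, the key technical input is property (ii): the short exact sequences of vertical cocycles, coboundaries, and cohomology in $J^{\bullet,\bullet}$ are sequences of injectives, hence split, and $G$ therefore commutes with vertical cohomology, giving $H^q_v(G(J^{p,\bullet}))\cong G(H^q_v(J^{p,\bullet}))$. By property (ii) once more, the complex $H^q_v(J^{\bullet,\bullet})$ obtained by letting $p$ vary is an injective resolution of $R^qF(A)$, so applying $G$ and then taking horizontal cohomology yields $E_2^{p,q}\cong (R^pG)(R^qF(A))$, as required. The hardest step is precisely this splitting argument and the resulting commutation of $G$ with vertical cohomology; everything else is formal bookkeeping with the standard spectral sequences of a first-quadrant double complex.
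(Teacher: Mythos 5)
The paper does not actually prove this theorem: it is cited directly as Theorem~12.10 of McCleary's book and then applied. So there is no internal argument of the paper to compare against. That said, your proof is correct, and it is the standard Cartan--Eilenberg argument that McCleary (and Weibel, Grothendieck's T\^ohoku paper, etc.) use: resolve $A$ injectively, apply $F$, take a Cartan--Eilenberg resolution $J^{\bullet,\bullet}$ of $F(I^\bullet)$, apply $G$ and run the two spectral sequences of the double complex. The row filtration degenerates because $F(I^q)$ is $G$-acyclic, identifying the abutment with $R^{p+q}(G\circ F)(A)$; the column filtration, together with the splitness of the short exact sequences of vertical cycles/boundaries/cohomology (so that $G$ commutes with taking vertical cohomology) and the fact that $H^q_v(J^{\bullet,\bullet})$ is an injective resolution of $R^qF(A)$, gives $E_2^{p,q}\cong (R^pG)(R^qF(A))$. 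One small expository point worth making explicit if you write this up: it is the first-quadrant condition on $J^{\bullet,\bullet}$ that guarantees both spectral sequences converge to the total cohomology, so you can legitimately read the abutment off the degenerate one and transfer it to the other.
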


Here $R^nF$ denotes the $n$-th right derived functor of a functor $F$. Also recall that an object  $B$ in $\cC_2$ is called $G$-acyclic if
$$R^nG(B)=\left\{\begin{array}{ll}
G(B), &\, n=0\\0, & \,  n\geq 1.
\end{array} \right.
$$

Now we will apply this theorem to the following situation:  Let $\cU \subseteq \cV \subseteq \cW$ be three families of subgroups of $G$ which are closed under conjugation and taking subgroups. Consider the composition
 $$\lim{}_{\cV}^{\cW}\circ\lim{}_{\cU}^{\cV}:R\G_{\cU}\to R\G_{\cW}.$$ By Lemma \ref{lem:composition}, this composition is equal to $\lim _{\cU} ^{\cW}$. We also know from the discussion at the beginning of the section that the limit functor is left exact and by Corollary \ref{cor:injtoinj} we know that it takes injectives to injectives.
So, we can apply the theorem above and conclude the following:

\begin{theorem}\label{thm:mostgeneralform} Let $G$ be a finite group and $\cU \subseteq \cV \subseteq \cW$ be three families of subgroups of $G$ which are closed under conjugation and taking subgroups. Then, there is a first quadrant spectral sequence 
$$E_2 ^{p,q}= ({\rm lim}_{\cV}^{\cW} )^p({\rm lim}^{\cV}_{\cU})^q(M)\Rightarrow ({\rm lim} _{\cU} ^{\cW})^{p+q}(M).$$
\end{theorem}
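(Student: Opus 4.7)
The plan is to invoke the Grothendieck spectral sequence quoted from \cite{mccleary} with the two-step composition $\lim_{\cV}^{\cW}\circ\lim_{\cU}^{\cV}$. I would set $\cC_1=R\G_{\cU}\text{-Mod}$, $\cC_2=R\G_{\cV}\text{-Mod}$, $\cC_3=R\G_{\cW}\text{-Mod}$, and take $F:=\lim_{\cU}^{\cV}$ and $G:=\lim_{\cV}^{\cW}$; the claimed spectral sequence should then be exactly the Grothendieck spectral sequence for $G\circ F$ applied to an $R\G_{\cU}$-module $M$.

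To apply the cited theorem I have to check its two hypotheses. Left exactness of $G$ was already noted at the start of Section \ref{sect:higherlimits}: at each $H\in\cW$ the value $(\lim_{\cV}^{\cW}N)(H)=\Hom_{R\G_{\cV}}(R[G/H^?],N)$ is left exact in $N$, and exactness of sequences of $R\G_{\cW}$-modules is tested objectwise. For the second hypothesis I need $F=\lim_{\cU}^{\cV}$ to carry injective $R\G_{\cU}$-modules to $G$-acyclic $R\G_{\cV}$-modules. By Corollary \ref{cor:injtoinj}, $F$ actually sends injectives to injectives; and any injective object $I$ is $G$-acyclic for the simple reason that $I\to I$ is itself an injective resolution, so its higher right-derived functors vanish in all positive degrees.

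With the hypotheses in hand, the Grothendieck theorem produces a first quadrant spectral sequence
$$E_2^{p,q}=(R^p G)(R^q F(M)) \Longrightarrow R^{p+q}(G\circ F)(M).$$
By the definition of higher limits given at the start of this section, $R^p G=(\lim_{\cV}^{\cW})^p$ and $R^q F=(\lim_{\cU}^{\cV})^q$. Finally, Lemma \ref{lem:composition} identifies the composite $G\circ F=\lim_{\cV}^{\cW}\circ\lim_{\cU}^{\cV}$ with $\lim_{\cU}^{\cW}$, so the abutment is $(\lim_{\cU}^{\cW})^{p+q}(M)$, yielding precisely the statement. The only nontrivial input is Corollary \ref{cor:injtoinj}, which has already been established as a formal consequence of the adjunction in Proposition \ref{pro:adjointness}, so I do not anticipate any real obstacle beyond assembling the pieces.
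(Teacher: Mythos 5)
Your proposal is correct and follows the same route as the paper: identify $\lim_{\cU}^{\cW}$ as the composite $\lim_{\cV}^{\cW}\circ\lim_{\cU}^{\cV}$ via Lemma \ref{lem:composition}, check left exactness and the injectives-to-injectives property (Corollary \ref{cor:injtoinj}), and invoke the Grothendieck spectral sequence. Your explicit remark that injective objects are automatically $G$-acyclic is a helpful detail the paper leaves implicit, but it does not change the argument.
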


The spectral sequence given in Theorem \ref{thm:main 3; spect. seq.} is a special case of the spectral sequence given above. To obtain the spectral sequence in Theorem \ref{thm:main 3; spect. seq.}, we take
$\cW=\{all\}$, $\cV=\cF$, and $\cU=\{1\}$ and evaluate everything at $G$. Then, the spectral sequence in Theorem \ref{thm:mostgeneralform}  becomes $$E_2^{p,q}=({\rm lim}_{\cF})^p ({\rm lim}^{\cF} _{\{1\}})^q (M)\Rightarrow ({\rm lim} 
_{\{ 1\}} )^{p+q} (M).$$  
Using Propositions \ref{pro:groupcohomology} and  \ref{pro:relativecohomology}, we can replace all the higher limits above with more familiar cohomology groups. As a result we obtain a spectral sequence
$$ E_2^{p,q}=\Ext_{R\G} ^p (\underline R , H^q (? , M) )\Rightarrow H^{p+q}(G,M).$$
Note that for $q=0$, we have $E_2 ^{p,0}= \Ext ^p _{R\G} (\underline R, M^?)\cong\cF H^q (G, M)$ by Theorem \ref{thm:main 1; computational tool}. So, the proof of Theorem \ref{thm:main 3; spect. seq.} is complete.

\begin{remark} The spectral sequence in Theorem \ref{thm:main 3; spect. seq.}
can also be obtained as a special case of a Bousfield-Kan cohomology spectral sequence of a homotopy colimit. Note that since the subgroup families that we take always include the trivial subgroup, they are ample collections, and hence the cohomology of the homotopy colimit of classifying spaces of subgroups in the family is isomorphic to the cohomology of the group. More details on this can be found in \cite{dwyer}.
\end{remark}

Note that if we consider $E_2^{p,q}$ with $p=0$, then we get
$$E_2 ^{0,q}= \Ext ^0 _{R\G} (\underline{R}, H^q(?, M))=\Hom _{R\G} (\underline{R}, H^q (?, M))=\lim_{\underset{H\in \cF} \longleftarrow } H^q (H, M).$$  This suggests the following proposition:

\begin{proposition}\label{pro:edgevertical} The edge homomorphism $$H^*(G, M) \twoheadrightarrow E_{\infty} ^{0,*} \rightarrowtail E_2^{0,*} \cong \lim_{\underset{H\in \cF} \longleftarrow } H^* (H, M)$$
of the spectral sequence in Theorem \ref{thm:main 3; spect. seq.} is given by the map $u\to (\Res ^G _H u)_{H\in \cF}$.
\end{proposition}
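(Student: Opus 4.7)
The plan is to compute the edge map by choosing a convenient injective resolution and unwinding the Grothendieck spectral sequence construction. First I would take $I^*$ to be an injective resolution of $M$ as an $RG$-module. By Corollary \ref{cor:injtoinj} applied with $\cV=\{1\}$ and $\cW=\cF$, the invariants functor $(-)^?$ sends $I^*$ to a complex $(I^*)^?$ of injective $\RG$-modules, which in particular are $\lim_{\cF}$-acyclic, so the Grothendieck spectral sequence for $\lim_{\cF}\circ(-)^?$ applied to $M$ can be computed directly from $\lim_{\cF}((I^*)^?)$, with no need for a second resolution. By Proposition \ref{pro:groupcohomology}, the $q$-th cohomology of $(I^*)^?$ is the $\RG$-module $H^q(?,M)$; by Lemma \ref{lem:composition}, $\lim_{\cF}((I^n)^?)=(I^n)^G$, and so $H^n(\lim_{\cF}((I^*)^?))=H^n(G,M)$.

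Next I would identify the edge homomorphism. In this setup the edge map of the Grothendieck spectral sequence is the canonical map $H^n(\lim_{\cF}C^*)\to\lim_{\cF}H^n(C^*)$ for $C^*=(I^*)^?$, induced by left exactness of $\lim_{\cF}$. Because $\lim_{\cF}$ commutes with kernels, the cocycles in $\lim_{\cF}((I^*)^?)$ coincide with $\lim_{\cF}(Z^n)$, where $Z^n\subseteq (I^n)^?$ is the submodule of cocycles. The natural projection $Z^n\twoheadrightarrow H^n(?,M)$ then induces $\lim_{\cF}(Z^n)\to\lim_{\cF}H^n(?,M)$, which vanishes on the coboundaries of $(I^*)^G=\lim_{\cF}((I^*)^?)$ and hence descends to the edge map $H^n(G,M)\to\lim_{\cF}H^n(?,M)$.

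To finish, I would trace an explicit cocycle. Given $u\in H^n(G,M)$ represented by $\tilde u\in (I^n)^G$, I would note that since $RG$ is free as an $RK$-module, each $I^n|_K$ is an injective $RK$-module, so $(I^*)|_K$ is an $RK$-injective resolution of $M$, $H^n((I^*)^K)\cong H^n(K,M)$, and the map induced by the inclusion of invariants $(I^*)^G\hookrightarrow(I^*)^K$ is by construction the standard restriction $\Res^G_K$. The $K$-component of the image of $u$ is therefore the class of $\tilde u$ in $H^n((I^*)^K)$, which is $\Res^G_K u$, giving $u\mapsto (\Res^G_K u)_{K\in\cF}$ as desired. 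The main care point will be to verify the identification in the second paragraph, namely that the abstract edge map of the Grothendieck spectral sequence really does agree with the naive map coming from left exactness; this holds precisely because $(I^*)^?$ already consists of $\lim_{\cF}$-acyclics, so no Cartan--Eilenberg replacement is required.
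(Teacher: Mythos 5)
Your proof is correct but follows a genuinely different strategy from the paper's. The paper's argument is by naturality: for each $H\in\cF$ it considers the analogous spectral sequence over $\G_H=\OrH$, observes that $\underline R=R[H/H^{?}]$ is projective over $R\G_H$ so that the $E_2$-page of that sequence is concentrated on the vertical line (where it gives $H^q(H,M)$), and then invokes the comparison theorem for spectral sequences along the restriction map $H^*(G,M)\to H^*(H,M)$ to conclude that the $H$-component of the edge map is $\Res^G_H$. Your argument instead unwinds the edge homomorphism directly from the construction of the Grothendieck spectral sequence: since $(-)^{?}=\lim^{\cF}_{\{1\}}$ carries injectives to injectives (Corollary \ref{cor:injtoinj}), the complex $(I^*)^{?}$ consists of $\lim_{\cF}$-acyclics, the abutment is $H^n(\lim_{\cF}(I^*)^{?})=H^n((I^*)^G)$, and the edge map to $E_2^{0,n}=\lim_{\cF}H^n(?,M)$ is the natural map $H^n(\lim_{\cF}C^*)\to\lim_{\cF}H^n(C^*)$ coming from left exactness of $\lim_{\cF}$; tracing a cocycle through the inclusions $(I^*)^G\hookrightarrow(I^*)^K$ then gives restriction on the nose. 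Your approach has the advantage of being completely explicit at the cochain level and of avoiding any appeal to naturality of the spectral sequence in the group variable (which the paper uses but does not spell out); its cost is that it leans on the standard but nontrivial fact that when $F$ carries injectives to $G$-acyclics the edge map $R^n(GF)(A)\to E_2^{0,n}$ of the Grothendieck spectral sequence coincides with the naive left-exactness map $H^n(G(F(I^*)))\to G(H^n(F(I^*)))$. You flag this as the main care point, and rightly so: a full verification would require a short detour through the Cartan--Eilenberg double complex to see that the two descriptions of the edge map agree. That is a well-known lemma, so the argument stands; just be aware that the paper's route via the collapsing spectral sequences for the subgroups $H$ sidesteps this technicality entirely, at the price of leaving the existence of the comparison map between the spectral sequences for $G$ and for $H$ slightly implicit.
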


\begin{proof} Note that for every $H\in \cF$, we can define $\Gamma _H=\OrH$ as the restriction of the orbit category $\Gamma_G =\OrG$ to $H$. The spectral sequence in Theorem \ref{thm:main 3; spect. seq.} for $H$ is of the form 
$$ E_2^{p,q}=\Ext_{R\G_H} ^p (\underline R , H^q (? , M) )\Rightarrow H^{p+q}(H,M).$$ Since $\underline R=R[H/H^?]$ is a projective $R\G _H$-module, we have $E_2^{p,q}=0$ for all $p>0$, so the edge homomorphism to the vertical line is an isomorphism. Now the result follows from the comparison theorem for spectral sequences. 
\end{proof}

For the other edge homomorphism first observe that there is a
natural homomorphism from relative group cohomology to the usual group cohomology $$\varphi: \cF H^n (G, M) \to H^n (G,M)$$ defined as follows: Let $P_*$ be a projective resolution of $R$ as an $RG$-module and $Q_*$ be a relative $\cF$-projective resolution of $R$. Since $P_*$ is a projective resolution and $Q_*$ is acyclic, by the fundamental theorem of homological algebra, there is a chain map $f_*: P_* \to Q_*$. This chain map induces a chain map $\Hom _{RG} (Q_*, M) \to \Hom _{RG} (P_*, M)$ of cochain complexes and hence a group homomorphism $\varphi: \cF H^n (G, M) \to H^n (G,M)$. The chain map $f_*$ is unique up to chain homotopy so the induced map $\varphi$ does not depend on the choices we make.

Alternatively, one can take an injective resolution $I^*$ of $M$ as an $RG$-module and an injective resolution $J^*$ of $M^?$ as an $R\G$-module. Since $(I^*)^?$ is still an injective resolution (but not exact anymore), we have a chain map $J^* \to (I^*)^?$ of $R\G$-modules which induces the identity map on $M^?$. Applying the functor $\Hom _{R\G} (\underline{R}, -)$ to this map, we get a map $\cF H^n (G, M) \to H^n (G, M)$. Note that this map is the same as the map $\varphi$ defined above. One can see this easily as a consequence of the balancing theorem in homological algebra which allows us to calculate ext-groups using projective or injective resolutions. Now we can prove the following: 

\begin{proposition}\label{pro:edgehorizontal} The edge homomorphism $$\cF H^*(G, M)\cong E_2 ^{*,0}  \twoheadrightarrow E_{\infty} ^{*,0} \rightarrowtail H^* (G, M)$$
of the spectral sequence in Theorem \ref{thm:main 3; spect. seq.} is given by the map $\varphi$ defined above.
\end{proposition}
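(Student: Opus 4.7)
The plan is to make the horizontal edge map of the Grothendieck spectral sequence explicit at the cochain level and to recognise the resulting recipe as the second construction of $\varphi$ given in the paragraph preceding the statement. I would begin by recalling that the spectral sequence of Theorem \ref{thm:main 3; spect. seq.} arises from the Grothendieck theorem applied to the composition of $F={\rm lim}_{\{1\}}^{\cF}:RG\text{-Mod}\to R\G\text{-Mod}$ and $G=\Hom_{R\G}(\un R,-):R\G\text{-Mod}\to R\text{-Mod}$; the hypothesis on $F$ is guaranteed by Corollary \ref{cor:injtoinj}, which says $F$ sends injectives to injectives, hence in particular to $G$-acyclics.

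Next I would build the spectral sequence concretely. Choose an injective resolution $M\to I^*$ in $RG\text{-Mod}$; then $F(I^*)=(I^*)^?$ is a complex of injective $R\G$-modules with $H^0((I^*)^?)=M^?$ and $H^q((I^*)^?)\cong H^q(?,M)$. Take a Cartan--Eilenberg resolution $(I^*)^?\to J^{*,*}$ by injective $R\G$-modules and apply $G$. Because every $(I^q)^?$ is already injective, the column filtration on the resulting double complex degenerates at $E_2$ and shows that the total cohomology is $H^*((I^*)^G)=H^*(G,M)$, while the row filtration yields the spectral sequence of Theorem \ref{thm:main 3; spect. seq.}. The horizontal edge map $E_2^{p,0}\to H^p(G,M)$ can then be read off as follows: pick any injective resolution $M^?\to K^*$ in $R\G\text{-Mod}$, lift the identity on $M^?$ to a chain map $\psi:K^*\to (I^*)^?$ (which exists, uniquely up to chain homotopy, because $(I^*)^?$ is a complex of injectives), and apply $\Hom_{R\G}(\un R,-)$ to obtain a chain map
$$\Hom_{R\G}(\un R,K^*)\longrightarrow \Hom_{R\G}(\un R,(I^*)^?)\cong (I^*)^G,$$
whose induced map on cohomology is the edge map $\cF H^*(G,M)\to H^*(G,M)$.

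Finally, I would observe that this prescription is word-for-word the second construction of $\varphi$ given in the discussion preceding the statement, so the two maps coincide. The hard part is justifying the concrete description of the edge map in the middle step: one must identify the horizontal edge homomorphism of the Cartan--Eilenberg spectral sequence with the map induced by $\psi$, and this is a slightly intricate diagram chase in the double complex $\Hom_{R\G}(\un R,J^{*,*})$ using that each $(I^q)^?$ is injective (so the column SS collapses) and that $\psi$ provides a comparison of the injective resolution $K^*$ of $M^?$ with the $0$-th vertical cohomology of $(I^*)^?$. This is the only place where one has to be careful with the sign and indexing conventions underlying the Grothendieck spectral sequence.
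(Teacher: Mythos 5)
Your proposal is correct and follows essentially the same route as the paper's proof: both take an injective resolution $M\to I^*$ in $RG$\text{-Mod}, use that $(I^*)^?$ is a complex of injective $R\G$-modules (Corollary \ref{cor:injtoinj}), compare an injective resolution of $M^?$ with $(I^*)^?$ via a chain map, apply $\Hom_{R\G}(\un R,-)$, and identify the resulting map with the second construction of $\varphi$. The only cosmetic difference is that you make the degeneration of the column spectral sequence of the Cartan--Eilenberg double complex explicit, whereas the paper phrases the same fact as the total complex being chain homotopy equivalent to $(I^*)^?$.
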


\begin{proof} Let $M \to I^*$ be an injective resolution of $M$ as an $RG$-module. Applying the limit functor $\lim _{\{ 1\}} ^{\cF}$ to $I^*$, we get a cochain complex $(I^*)^?$ of $R\G$-modules. Note that by construction there is a chain map
$M^? \to (I^* )^? $ where $M^?$ is a chain complex concentrated at zero. In the construction of Grothendieck spectral sequence, one takes a injective resolution of the cochain complex $(I^*)^?$ to obtain a double complex $C^{*,*}$ where for each $q$, $$0\to (I^q)^? \to C^{0,q} \to C^{1,q} \to \cdots $$  is an injective resolution of $(I^q)^?$. Let $M^? \to J^*$ be an injective resolution of $M^?$ as an $R\G$-module. By the fundamental theorem of homological algebra, there is chain map $J^*\to C^{*,*}$ which comes from a chain map towards the bottom line of the double complex. When we apply $\lim _{\cF}$ to this chain map, we obtain a map of cochain complexes $\lim _{\cF} J^*\to \lim _{\cF} C^{*,*}$ and the edge homomorphism is the map induced by this chain map. Since the total complex of the double complex $C^{*,*}$ is chain homotopy equivalent to $(I^*)^?$,  we obtain that the edge homomorphism is induced by a chain map $\lim _{\cF} J^*
\to \lim _{\cF}(I^*)^?=(I^*)^G $ where $I^*$ is an injective resolution of $M$ as an $RG$-module and $J^*$ is an
injective resolution of $M^?$ as an $R\G$-module. Note that this chain map is defined in the same way as the chain map that induces the map $\varphi$. Since any two chain maps $J^* \to (I^*)^?$ are chain homotopy equivalent, the edge homomorphism is the same as the map $\varphi$. 
\end{proof}

\begin{corollary}\label{cor:edgehom} Let $R_0$ denote the $R\G$-module with the value $R$ at $1$ and the value zero at every other subgroup.
Then the edge homomorphism $$E_2 ^{q,0}=\Ext ^q _{R\G} (\underline R , M^?) \to H^q (G, M)\cong \Ext^q _{R\G} (R_0 , M^?)$$ is the same as the map induced by the $R\G$-homomorphism $R_0 \to \underline R$ which is defined as the identity map at the trivial subgroup and the zero map at every other subgroup.
\end{corollary}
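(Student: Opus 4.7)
The plan is to compute the map on $\Ext$-groups induced by the $R\G$-homomorphism $R_0 \to \underline R$ at the chain level, using carefully chosen projective resolutions, and then to recognize the outcome as the natural comparison map $\varphi: \cF H^*(G,M) \to H^*(G,M)$ that was identified with the edge homomorphism in Proposition \ref{pro:edgehorizontal}.

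First I would fix a projective resolution $P_* \to \underline R$ of $\underline R$ as an $R\G$-module and, following the construction used in the proof of Lemma \ref{lem:comp for R_0}, a projective $R\G$-resolution $Q_* \to R_0$ with $Q_*(1) = F_*$ a free $RG$-resolution of the trivial module $R$ and $Q_*(H)=0$ for every $H \neq 1$. Each $Q_n$ is a direct sum of copies of $\uR{G/1}$, so $Q_*$ really is a projective $R\G$-resolution, and since $\uR{G/1}(1) \cong RG$, its value $Q_*(1) = F_*$ at the trivial subgroup is a projective $RG$-resolution of $R$. By Proposition \ref{pro:proj resolution at G/1}, the chain complex $P_*(1)$ is a relative $\cF$-projective resolution of $R$. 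The map $R_0 \to \underline R$ lifts, uniquely up to chain homotopy, to a chain map $\tilde f \colon Q_* \to P_*$ of $R\G$-modules, and the map on $\Ext$-groups induced by $R_0 \to \underline R$ is by definition the cohomology of the cochain map obtained by applying $\Hom_{R\G}(-, M^?)$ to $\tilde f$.

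The key step is then to apply Lemma \ref{lem:important property of M^?} to identify this with the map
$$\Hom_{RG}(P_*(1), M) \longrightarrow \Hom_{RG}(Q_*(1), M)$$
induced by $\tilde f(1) \colon Q_*(1) \to P_*(1)$. But $\tilde f(1)$ is precisely a chain map from a projective $RG$-resolution of $R$ to a relative $\cF$-projective resolution of $R$, both covering $\id_R$, and by the definition of $\varphi$ given just before Proposition \ref{pro:edgehorizontal} the map it induces on cohomology is exactly $\varphi$. Combined with Proposition \ref{pro:edgehorizontal}, this gives the corollary. The only thing to watch carefully is the matching of variances and directions after applying the contravariant functor $\Hom_{R\G}(-, M^?)$, together with the verification that the specific $Q_*$ chosen has $Q_*(1)$ genuinely projective over $RG$; both reduce to direct computation from the definitions in Section \ref{sect:orbitcategory}, so I do not expect any substantive obstacle beyond this bookkeeping.
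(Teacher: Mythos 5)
Your proposal is correct and takes essentially the same approach as the paper's proof: both arguments choose a projective $R\G$-resolution of $\underline R$ whose value at $1$ is a relative $\cF$-projective resolution (Proposition \ref{pro:proj resolution at G/1}), choose a projective $R\G$-resolution of $R_0$ concentrated at the trivial subgroup whose value at $1$ is a free $RG$-resolution of $R$, lift $R_0 \to \underline R$ to a chain map between them, and evaluate at $1$ to recover the chain map defining $\varphi$. Your explicit invocation of Lemma \ref{lem:important property of M^?} to carry out the identification of $\Hom$-complexes is a slightly more careful bookkeeping of the same step that the paper treats implicitly.
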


\begin{proof} We already showed above that the edge homomorphism is the same as the map $\varphi$ which is a map $\Ext ^q _{R\G} (\underline R, M^?) \to \Ext ^q _{R\G} (R_0, M)$ under natural identifications. In the definition of $\varphi$ we take an $\cF$-projective resolution $Q_*$ and a projective resolution $P_*$ of $R$ and define $\varphi$ as the map induced by a chain map $f_*:P_* \to Q_*$. Note that we can consider $P_*$ also as a projective resolution of $R_0$ as an $R\G$-module and we can take $Q_*$ as $S_*(1)$ for some projective 
resolution $S_*$ of $\underline R$ as an $R\G$ module. So, the chain map $f_*$ can be taken as $g_* (1)$ for a chain map $g_* : P_* \to S_*$ of $R\G$-modules which cover the map $R_0 \to \underline R$. So the proof follows from this observation. 
\end{proof}

The spectral sequence given in Theorem \ref{thm:main 3; spect. seq.} has some interesting connections to essential cohomology which is defined as follows: Let $G$ be a finite group and $\cF$ denote the family of all proper subgroups (including the trivial group but not the group $G$ itself). Then the kernel of the edge homomorphism  $$\prod _{H \in \cF} \Res ^G _H : H^* (G, H) \longrightarrow \lim _{\underset{H\in \cF} \longleftarrow } H^* (H, M)$$
is called the {\it essential cohomology} of $G$ and it is denoted by ${\mathcal Ess} ^*(G)$. We see that the essential cohomology classes are exactly the cohomology classes coming from $E_{\infty} ^{p,q}$ with $p>0$. Essential cohomology classes coming from the vertical line $E_{\infty} ^{p,0}$ with $p>0$ are called relative essential cohomology classes and the subring generated by these cohomology classes is called the {\it relative essential cohomology}. Note that relative essential cohomology classes are the essential cohomology classes can be described as extension classes of $\cF$-split extensions. It is interesting to ask how much of the essential cohomology comes from relative essential cohomology classes. This was a question that was raised in \cite{yalcin}. The spectral sequence given in Theorem \ref{thm:main 1; computational tool} can be used to study these types of questions. We only discuss a simple case here.

\begin{example}\label{ex:SpectSeqCalc} Let $G=\bbZ/2 \times \bbZ /2$ and $\cF=\{ 1, H_1, H_2, H_3\}$ be the family of all proper subgroups. Let $R=\bbF _2$. Then, the spectral sequence of Theorem \ref{thm:main 3; spect. seq.} with $M$ equal to the trivial module $R$ has the values $$E_2 ^{p,q}=\Ext ^p _{R\G} (\underline R, H^q (?, R)) \cong \oplus _3 R$$ for all $q>0$ and $p\geq 0$. At $q=0$, the dimensions of $E_2 ^{p,q}$ are given by the sequence $(1,0,1,3,5,7,\dots)$ by the computation in Section \ref{sect:computation}. We claim that in this spectral sequence the horizontal edge homomorphism is the zero map, i.e., all the relative group cohomology on the horizontal line dies at some page of the spectral sequence. To see this, first observe that by Corollary \ref{cor:edgehom}, the horizontal edge homomorphism is given by the map $\varphi : \Ext^q _{R\G} (\underline R, \underline R) \to \Ext ^q _{R\G} (R_0 , \underline R)$. Note that for some $i\in \{1,2,3\}$, the map  $R_0\to \underline R$ can be written as a composition $$R_0 \maprt{\gamma_i} R_{H_i} \maprt{\tau _i} \underline R$$ where $\gamma_i$ is the map we defined in Section \ref{sect:computation} and $\tau_i : R_{H_i}\to \underline{R}$ is the $R\G$-module homomorphism which is defined as the identity map at subgroups $H_i$ and $1$ and the zero map at other subgroups. From this we can see that 
the map $\varphi$  factors through the map 
$\tau_i^*:\Ext^q _{R\G} (\underline R, \underline R) \to \Ext ^q _{R\G} (R_{H_i} , \underline R)$.  In the computation in Section \ref{sect:computation}, we have seen that the map
$$\pi ^*: \Ext^q _{R\G} (\underline R, \underline R) \to \Ext ^q _{R\G} (\oplus R_{H_i} , \underline R)$$ is the zero map for $q\geq 1$. So, $\tau _i ^*$ is also the zero map for all $i\in \{1,2,3\}$. This gives that the horizontal edge homomorphism is zero. This shows in particular that for this group the relative essential cohomology is zero.
\end{example}

{\it Acknowledgements:} The material in the first two sections of the paper is part of the first author's Ph.D. thesis. The first author thanks her thesis advisor Ian Hambleton for his constant support during her Ph.D. Both authors thank Ian Hambleton for his support which made it possible for the authors to meet at McMaster University where the computations appearing in Section \ref{sect:computation} were done. The material appearing in the last two sections is derived from the second author's discussions with Peter Symonds during one of his visits to Bilkent University. The second author thanks T\" UB\. ITAK for making this visit possible and Peter Symonds for introducing him to the connections between higher limits and relative group cohomology.

\providecommand{\bysame}{\leavevmode\hbox to3em{\hrulefill}\thinspace}
\providecommand{\MR}{\relax\ifhmode\unskip\space\fi MR }
\providecommand{\MRhref}[2]{%
  \href{http://www.ams.org/mathscinet-getitem?mr=#1}{#2}}
\providecommand{\href}[2]{#2}

\end{document}